%% file: disguised.tex
\documentclass[10pt]{article}

\usepackage{amsmath,amsfonts,amssymb,amsthm}
\usepackage{enumerate}
\usepackage[hyphens]{url}
\usepackage{tcolorbox}
\usepackage{hyperref}
\usepackage{xspace}
\usepackage[margin=3.5cm]{geometry}
\usepackage{marvosym}

\theoremstyle{plain}
\newtheorem{theorem}{Theorem}
\newtheorem{lemma}[theorem]{Lemma}
\newtheorem{corollary}[theorem]{Corollary}
\newtheorem{proposition}[theorem]{Proposition}

\theoremstyle{definition}
\newtheorem{definition}[theorem]{Definition}

\newtheorem{remark}[theorem]{Remark}

\newcommand\R{\mathbb{R}}

\newcommand{\dd}[2]{\frac{\textnormal{d} #1}{\textnormal{d} #2}}

\DeclareMathOperator{\im}{im} 

\DeclareMathOperator{\e}{e} 
\DeclareMathOperator{\diag}{diag}

\newcommand{\ones}{1}
\newcommand{\trans}{\mathsf{T}}

\newcommand{\de}{\stackrel{\text{DE}}{=}}

\definecolor{darkgreen}{rgb}{0.0,0.7,0.0}

\newcommand\gray[1]{{\textcolor{gray}{#1}}}
\newcommand\newdef[1]{#1}

\newcommand\blfootnote[1]{%
  \begin{NoHyper}
  \renewcommand\thefootnote{}\footnote{#1}%
  \addtocounter{footnote}{-1}%
  \end{NoHyper}
}

\usepackage{algorithm, algpseudocode}

\makeatletter

\begin{document}

\title{Disguised complex balance  
via positive algebraic geometry}

\author{
Stefan M\"uller\textsuperscript{\Letter}, 
Abhishek Deshpande, 
Georg Regensburger
}

\maketitle

\begin{abstract}
We study dynamical systems arising from reaction networks under mass-action kinetics. For certain choices of the rate constants (parameters), such systems are complex-balanced (vertex-balanced), which guarantees the existence of a unique positive equilibrium. Moreover, this equilibrium is asymptotically stable (admitting a global Lyapunov function) and linearly stable.

In a series of recent papers, Craciun and collaborators introduced and studied {\em disguised} complex-balanced systems, that is, mass-action systems that are dynamically equal to auxiliary complex-balanced systems and therefore inherit their strong stability properties. Determining the parameter values for which a given system is disguised complex-balanced is a nontrivial algebraic problem.

In this work, we show that the defining conditions for disguised complex-balanced equilibria naturally give rise to parametrized systems of polynomial {\em inequalities}. Using the framework for positive algebraic geometry developed by Müller and Regensburger, we reformulate these systems as binomial equations (on the {\em disguised complex-balanced flux cone}). 
Computing the {\em disguised complex-balanced parameter locus} can be viewed as a quantifier-elimination problem, and our approach eliminates the concentrations (state variables) from the problem. 

We illustrate our results using the running example of a recent paper by Boros et al.

\vspace{2ex}
\noindent
{\bf Keywords.} 
reaction networks,
mass-action kinetics,
complex balance,
dynamical equality,
polynomial inequalities,
monomial dependency

\vspace{2ex}
\noindent
{\bf MSC classes.} 
37N25,
34C08,
14P10,
14Q30,
92C42
\end{abstract}

\blfootnote{
\scriptsize

\noindent
{\bf Stefan~M\"uller} \\
Faculty of Mathematics, University of Vienna, Oskar-Morgenstern-Platz 1, 1090 Wien, Austria \\
\Letter \, st.mueller@univie.ac.at \\[1ex]
{\bf Abhishek Deshpande} \\
Center for Computational Natural Sciences and Bioinformatics,
International Institute of Information Technology Hyderabad (IIIT),
Hyderabad, Telangana 500032,
India \\[1ex]
{\bf Georg Regensburger} \\
Research Institute for Symbolic Computation (RISC),
Johannes Kepler University Linz, Kirchenplatz 5, 4232 Hagenberg, Austria
}


\clearpage

\section{Introduction}

\color{black}

Every polynomial dynamical system can be viewed as a reaction network with mass-action kinetics, and reaction network theory provides powerful tools for studying the existence, uniqueness, and stability of equilibria. In particular, if a mass-action system is complex-balanced (or vertex-balanced, in modern terminology~\cite{CraciunMuellerPanteaYu2019}), then every invariant subspace (stoichiometric compatibility class) contains a unique positive equilibrium~\cite{HornJackson1972}, which is moreover asymptotically stable~\cite{HornJackson1972} and linearly stable~\cite{Johnston2011,Feinberg2019,Boros2019}. On the one hand, this provides a powerful criterion guaranteeing stable dynamics. On the other hand, complex balance imposes severe structural restrictions: it implies that the underlying reaction network is weakly reversible~\cite{Horn1972}, that is, every connected component of the underlying graph is strongly connected.


Since the same dynamical system can be generated by different reaction networks (with different rate constants)~\cite{HarsToth1987,CraciunPantea2008,SzederkenyiHangos2011,JohnstonEtAl2012}, a given mass-action system need not itself be complex-balanced, but may admit a dynamically equal realization that is. The systematic study of such {\em disguised}~\cite{BrustengaEtAl2022} complex-balanced systems was initiated by Craciun and collaborators in~\cite{CraciunJinYu2020}. Since then, the disguised complex-balanced parameter locus has been investigated from several perspectives. For example, it has been shown to be path connected~\cite{CraciunEtAl2024}, and its dimension has been determined~\cite{CraciunEtAl2025}. More recently, Boros et al.~\cite{BorosEtAl2026} introduced a flux-based approach, providing new geometric insight into the disguised complex-balanced parameter locus and substantially simplifying computations. For more background and references, we refer to~\cite{CraciunJinYu2020,BorosEtAl2026}. Nevertheless, obtaining an explicit characterization of the disguised complex-balanced parameter locus remains a nontrivial quantifier-elimination problem.


In this paper,
we first formulate disguised complex balance as a parametrized system of polynomial {\em inequalities}. 
Given a reaction network with graph $G=(V,E)$ and positive rate constants $k \in \R^E_>$,
we show that whether~$k$ belongs to the disguised complex-balanced parameter locus~$\mathcal{K}^\text{dCB}$ is equivalent to the existence of positive state variables $x\in \R^n_>$
such that the vector of reaction rates
\[
v_k(x) = k \circ x^{Y_s I_{E,s}}
\]
belongs to the disguised complex-balanced flux cone~$\mathcal{C}^\text{dCB}$,
\begin{align*}
k \in \mathcal{K}^\text{dCB}
&\iff
\exists x \colon v_k(x) \in \mathcal{C}^\text{dCB} \\
&\iff 
\exists (x,\nu) \colon \nu = v_k(x) \wedge \nu \in \mathcal{C}^\text{dCB} ,
\end{align*}
cf.~Theorem~\ref{thm:KdCB_x}.
The first formulation exhibits disguised complex balance as a parametrized system of polynomial inequalities, where the vector of monomial terms $v_k(x)$ is constrained to lie in the polyhedral cone $\mathcal{C}^\text{dCB}$.
The second formulation defines a quantifier elimination problem with parameter $k$ and quantified variables $x$ and $\nu$. 
Although implicit in previous work on disguised complex balance, the characterization of Theorem~\ref{thm:KdCB_x} has, to our knowledge, not been stated explicitly.



Using the framework for positive algebraic geometry developed in~\cite{MuellerRegensburger2023a},
we reformulate this parametrized system of polynomial inequalities
as a system of linear inequalities (defining the disguised complex-balanced flux cone) and binomial equations (on that cone),
\[
k \in \mathcal{K}^\text{dCB} \iff \exists \nu \in \mathcal{C}^\text{dCB}_> \colon \nu^z=k^z , \, \forall z \in \mathcal{D} ,
\]
cf.~Theorem \ref{thm:KdCB_y}. Here, 
\[
\mathcal{D} = \ker \binom{Y_s I_{E,s}}{\ones^\trans}
\]
is the (monomial) dependency subspace,
recording the affine dependencies among the source complexes (equivalently, among the source monomials) of the reactions.
That is, as our main result, we eliminate the state variables $x$ from the problem, thereby bypassing one expensive quantifier elimination step in the computation of the disguised complex-balanced parameter locus.

In contrast, Boros et al.~\cite{BorosEtAl2026} compute the disguised complex-balanced parameter locus by performing quantifier elimination on the full system involving both state and flux variables, $x$ and~$\nu$.
At this point, we briefly clarify terminology. Complex balance is also referred to as vertex balance in the modern reaction network literature~\cite{CraciunMuellerPanteaYu2019}, and the (disguised) complex-balanced parameter locus is often called the (disguised) toric locus~\cite{BrustengaEtAl2022}. However, we avoid the term toric in this paper to prevent confusion with its established meaning in algebraic geometry.


Since our approach combines reaction network theory with recent developments in positive algebraic geometry, we present a largely self-contained treatment. In particular, we provide a streamlined proof of a key observation from Craciun et al.~\cite{CraciunJinYu2020} concerning dynamical equivalence to complex-balanced systems,
cf.~Theorem~\ref{thm:inclusion}.

Finally, we illustrate our results using the running example of Boros et al.~\cite{BorosEtAl2026}, a partially reversible cycle.
In particular, we obtain the disguised complex-balanced parameter locus analytically, after eliminating the state variables.


\subsubsection*{Notation}
We denote the positive (nonnegative) real numbers by $\R_>$ ($\R_\ge$).
Throughout the work, we use index notation: for a finite index set $I$,
we write $\R^I$ for the real vector space of vectors $x=(x_i)_{i \in I}$ with $x_i \in \R$,
and analogously we write $\R^I_>$ ($\R^I_\ge$).
For $I=\{1,\ldots,n\}$, we have the standard case $\R^I=\R^n$.
We write $x>0$ for $x \in \R^I_>$ and $x \ge 0$ for $x \in \R^I_\ge$.

For vectors $x,y \in \R^I$, we denote their componentwise (Hadamard) product by $x \circ y \in \R^I$.
For $x \in \R^I_>, \, y \in \R^I$, we define the (generalized) monomial $x^y = \prod_{i \in I} (x_i)^{y_i} \in \R_>$,
and for $x \in \R^I_>, \, Y \in \R^{I \times J}$, 
we define the vector of monomials $x^Y \in \R^J_>$ via $(x^Y)_j =x^{y(j)}$,
where $y(j)$ is the column of $Y$ with index $j \in J$.


\section{Basic definitions}




We recall basic notions for reaction networks with mass-action kinetics~\cite{adleman2014mathematics,voit2015150,yu2018mathematical} following~\cite{MuellerRegensburger2014}.
Note that we use index notation (rather than pure cardinality notation) in the definition of matrices,
thereby avoiding arbitrary orderings of the underlying sets.

\begin{definition} \label{def:net}
A reaction network $(G,y)$ 
is given by a simple directed graph $G=(V,E)$
with a finite set of vertices $V$ and edge set $E \subseteq V \times V$
together with an injective map ${y \colon V \to \R^n}$
(a matrix $Y \in \R^{n \times V}$).
\end{definition}

In terms of (chemical) reaction networks,
every vertex $i \in V$
is labeled with a {\em (stoichiometric) complex} $y(i) \in \R^n$,
representing a formal sum of $n$ species (e.g.\ $X_1, \dots, X_n$).
Every edge $(i \to j) \in E$ represents a {\em reaction} ${y(i) \to y(j)}$,
transforming the {\em source} complex $y(i)$ into the {\em target} complex $y(j)$.

The restriction of the map $y$ (the matrix $Y$) to the set of source vertices $V_s \subseteq V$ is denoted by $y_s \colon V_s \to \R^n$ (respectively, $Y_s \in \R^{n \times V_s}$).
If all components of the graph are strongly connected,
the network is called {\em weakly reversible}.

\begin{remark}
In the classical definition of a reaction network,
complexes and reactions are primary objects and induce the complex-reaction graph.
Reaction networks as in Definition~\ref{def:net} can also be viewed as Euclidean-embedded graphs~\cite{craciun2015toric,craciun2019polynomial}.
\end{remark} 

\begin{definition} 
A {\em mass-action system} $(G_k,y)$ is given by a reaction network $(G,y)$
and positive edge labels $k \in \R^E_>$.
\end{definition}

In terms of (chemical) reaction networks,
every edge $(i \to j) \in E$ (that is, every reaction $y(i) \to y(j)$) is labeled with a {\em rate constant} $k_{i \to j} > 0$.
\newdef{By a slight abuse of notation, we denote by $k \in \R^{V_s \times V}_\ge$ the nonnegative matrix
\[
k_{ij} = \begin{cases}
    k_{i \to j}, & \text{if } (i \to j) \in E , \\
    0, & \text{otherwise,}
\end{cases}
\]
corresponding to the positive vector  $k \in \R^E_>$.
}

The associated ODE system for the non-negative state variables {\em (concentrations)} $x \in \R^n_\ge$ 
is given by
\begin{equation} \label{eq:ode}
	\dd{x}{t} 
	= \sum_{(i \to j) \in E} k_{i \to j} \, x^{y(i)} \big( y(j)-y(i) \big) .
\end{equation}
The sum ranges over all reactions, 
and every summand is a product of the {\em reaction rate} $k_{i \to j} \, x^{y(i)}$, involving a monomial $x^{y} = \prod_{j=1}^n (x_j)^{y_j}$ determined by the source complex,
and the {\em reaction vector} $y(j)-y(i)$ given by both the source and target complexes.

Let $I_E \in \{-1,0,1\}^{V \times E}$ and $I_{E,s} \in \{0,1\}^{V_s \times E}$ be the incidence and source matrices of the digraph~$G$, respectively,
and $R_k = I_E \diag(k) \, I_{E,s}^\trans \in \R^{V \times V_s}$
be the rectangular ``Laplacian matrix''.
Further, 
recall the (source) complex matrices, $Y \in \R^{n \times V}$ and $Y_s \in \R^{n \times V_s}$,
and let $N = Y I_E \in \R^{n \times E}$ be the stoichiometric matrix
and $v_k(x) = k \circ x^{Y_s I_{E,s}} \in \R^E_\ge$ be the vector of reaction rates.
Then, 
the right-hand-side of \eqref{eq:ode} can be written in matrix form and decomposed into stoichiometric and graphical contributions,
\begin{equation} \label{eq:ode:matrices}
\begin{aligned}
	\dd{x}{t}
    &= \underbrace{Y I_E}_N \underbrace{\left( k \circ x^{Y_s I_{E,s}} \right)}_{v_k(x)}
	= Y \underbrace{I_E \diag(k) \, I_{E,s}^\trans}_{R_k} x^{Y_s} \\
    &= \Gamma_k \, x^{Y_s} .
    \end{aligned}
\end{equation}
\newdef{In the last step,
we have introduced the {\em kinetic matrix} 
\[
\Gamma_k = Y R_k \in \R^{n \times V_s} ,
\]
in analogy to the stoichiometric matrix $N = Y I_E$.
While the colums of the stoichiometric matrix (indexed by edges) are the reaction vectors,
the columns of the kinetic matrix (indexed by source vertices) are weighted sums of reaction vectors,
\[
\gamma^i_k = \sum_{(i \to j) \in E} k_{i \to j} \left( y(j)-y(i) \right) \in \R^n ,
\quad \text{for } i \in V_s ,
\]
and we call them {\em source vectors}.
If $\gamma^i_k \neq 0$, we call the source vertex $i \in V_s$ --- as well as the source complex $y(i)$ --- {\em active};
otherwise, we call it {\em inactive}.}

The change over time lies in the {\em kinetic subspace} $K = \im (Y R_k)$
and further in the stoichiometric subspace {\em stoichiometric subspace} $S = \im (Y I_E)$, 
\[
\dd{x}{t} \in K \subseteq S .
\]
Hence, trajectories are confined to cosets of $K$ and $S$, respectively,
that is, $x(t) \in x(0)+K \subseteq x(0) + S$.
For positive $x' \in \R^n_>$, the sets $(x'+K) \subseteq (x'+S) \cap \R^n_>$ are called {\em kinetic} and {\em stoichiometric compatibility classes}, respectively.
It is well known that $K=S$ if the network is weakly reversible; cf.~\cite{FeinbergHorn1977}.

\begin{remark}
Traditionally,
one uses a source matrix $\mathcal{I}_{E,s} \in \{0,1\}^{V \times E}$ that involves all vertices (not just the source vertices),
and one obtains
\[
\dd{x}{t} = Y I_E \left( k \circ x^{Y \mathcal{I}_{E,s}} \right) 
= Y \underbrace{I_E \diag(k) \, \mathcal{I}_{E,s}^\trans}_{\mathcal{L}_k} x^{Y}
\]
with the (square) Laplacian matrix $\mathcal{L}_k \in \R^{V \times V}$.
This formulation can be misleading
since columns of $\mathcal{L}_k$ corresponding to non-source vertices are zero,
and, after multiplication, non-source monomials do not appear in $\dd{x}{t} = Y \mathcal{L}_k \, x^{Y}$ for any~$k$.
Indeed,
$R_k$ arises from $\mathcal{L}_k$ by deleting zero columns,
and $\im R_k = \im \mathcal{L}_k$. 
\end{remark}

\subsubsection*{Complex balance}

\begin{definition}
Let $(G_k,y)$ be a mass-action system.
A positive equilibrium $x \in \R^n_>$ of the associated ODE system~\eqref{eq:ode:matrices} is called {\em complex-balanced} if
\[
I_E \, v_k(x) = R_k \, x^{Y_s} = \mathcal{L}_k \, x^Y = 0 .
\]
\end{definition}
If there exists a positive complex-balanced equilibrium, 
then all equilibria are complex-balanced~\cite{HornJackson1972}, and $(G_k,y)$ is called complex-balanced.
If $(G_k,y)$ is complex-balanced, then $(G,y)$ is weakly reversible~\cite{Horn1972} and hence $V=V_s$.

\newdef{For positive $x \in \R^n_>$, we denote by $v \in \R^{V \times V}_\ge$ the nonnegative matrix
\[
v_{ij} = k_{ij} \, x^{y(i)}, \quad \text{for } i,j \in V ,
\]
corresponding to the positive vector $v_k(x) \in \R^E_>$.
Then the action of the incidence matrix is given by
\[
\left( I_E \, v_k(x) \right)_i = \sum_{j \in V} v_{ij} - \sum_{j \in V} v_{ji} , \quad \text{for } i \in V ,
\]
and complex balance is equivalent to $\sum_{j \in V} v_{ij} = \sum_{j \in V} v_{ji}$ for all $i \in V$.
}

\begin{definition}
Let $(G,y)$ with $G=(V,E)$ be a reaction network.
Then, its {\em complex-balanced parameter locus} is given by
\[
\mathcal{K}^\text{CB}_{(G,y)} = \{ k \in \R^E_> \mid \exists x \in \R^n_> \colon I_E \, v_k(x) = 0 \} .
\]
\end{definition}


\section{Results}

A reaction network with parameters (a mass-action system) need not be complex-balanced, 
but it may have a dynamically equal realization that is complex-balanced.

\subsection{Dynamical equality}

\begin{definition} \label{def:dynequ}
Let $(G_k,y)$ with $G=(V,E)$ and
$(G'_{k'},y')$ with $G'=(V',E')$ be mass-action systems.
Then, $(G_k,y')$ and $(G'_{k'},y')$ are {\em dynamically equal} if
\[
\Gamma_{(G_k,y)} \, x^{Y_s} = \Gamma_{(G'_{k'},y')} \, x^{Y'_s}
\]
for all $x \in \R^n_>$.
We write $(G_k,y) \de (G'_{k'},y')$.
\end{definition}

Let two mass-action systems be dynamically equal.
If one system contains an {\em active} source vertex---with corresponding source complex and {\em nonzero} source vector,
then the other system must contain the same source complex---for some active source vertex. 
However, both systems may contain additional source vertices/complexes as long as they are inactive (and hence do not affect the dynamics).

\subsubsection*{Dynamical equality and complex balance}

In Theorem~\ref{thm:CJY2020} below, we identify all relevant complex-balanced mass-action systems $(G'_{k'},y')$
that are dynamically equal to a given mass-action system $(G_k,y)$.
By definition,
for every active source vertex $i \in V_s$ of the given system,
the dynamically equal system must also contain the source complex $y(i)$.
Without loss of generality, 
we may assume $i \in V'_s$ and $y'(i) = y(i)$.
As it turns out, we do not have to consider inactive sources in dynamically equal, complex-balanced systems,
that is, $V'_s \subseteq V_s$.
This was observed in~\cite[Section~4]{CraciunJinYu2020}.

We first formulate a result corresponding to~\cite[Lemma~4.3, Theorems~4.4 and~4.5]{CraciunJinYu2020} that uses essentially the same construction (a redirection of ``fluxes''), but avoids extra definitions such as flux systems, flux equivalence, potential, etc.
To ensure that the paper is self-contained, we provide a proof,
which uses ``fluxes'' only as a name for reaction rates (monomial terms).

\begin{theorem} \label{thm:inclusion}
Let $(G_k,y)$ with $G=(V,E)$ be a complex-balanced mass-action system.
Then, there exists a dynamically equal, complex-balanced mass-action system $(G'_{k'},y')$ with $G'=(V',E')$
such that $V' = V_a$, $y' = y|_{V'}$,
where $V_a \subseteq V_s$ is the set of active source vertices of $(G_k,y)$.
\end{theorem}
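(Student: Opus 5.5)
The plan is to fix a positive complex-balanced equilibrium $x^* \in \R^n_>$ of $(G_k,y)$ and to work with the flux matrix $v_{ij} = k_{ij}\,(x^*)^{y(i)}$, $i,j \in V$. Complex balance gives $\sum_j v_{ij} = \sum_j v_{ji}$ for all $i \in V$, and $V = V_s$ by weak reversibility. I will then remove the inactive vertices one at a time by ``redirecting fluxes'' through them. At each step two things must be preserved: every source vector $\gamma^i$ of the remaining vertices, which gives dynamical equality, and the balance of fluxes at $x^*$, which gives complex balance. The new rate constants are read off at the end as $k'_{ij} = v'_{ij}/(x^*)^{y(i)}$.

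\emph{One redirection step.} Let $m \in V$ be inactive, so $\gamma^m_k = 0$. Put $o_m = \sum_j v_{mj} > 0$ and $p_j = v_{mj}/o_m$. Then $\gamma^m_k = 0$ says exactly that $y(m) = \sum_j p_j\, y(j)$, so $y(m)$ is a convex combination of its targets. Delete $m$ together with all edges incident to it. For every edge $i \to m$ with $i \neq m$, add flux $v_{im}\,p_j$ to the edge $i \to j$ for each $j$, summing with existing parallel flux and discarding self-loops $i \to i$, which carry reaction vector $0$.

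\emph{Source vectors are unchanged.} For $i \neq m$, the contribution of the old edge $i \to m$ to $(x^*)^{y(i)}\gamma^i$ was $v_{im}\,(y(m)-y(i))$. The redirected flux contributes $\sum_j v_{im}\,p_j\,(y(j)-y(i)) = v_{im}\,(y(m)-y(i))$, which is the same. Dividing by $(x^*)^{y(i)}$, the columns $\gamma^i$ of the kinetic matrix are unchanged for every $i \neq m$. The removed column $\gamma^m$ was zero, so $\Gamma\,x^{Y_s}$ is unchanged for all $x \in \R^n_>$, which is dynamical equality.

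\emph{Complex balance is preserved.} For a remaining vertex $j$, the outflow is unchanged, since edges to $m$ were replaced by edges carrying the same total flux; dropping a self-loop lowers inflow and outflow equally. The inflow gains $\sum_{i \neq m} v_{im}\,p_j$. If $m \to m$ is not an edge (the graph is simple), this equals $o_m\,p_j = v_{mj}$, by complex balance at $m$. That is precisely the inflow lost by deleting $m \to j$, so $x^*$ remains a complex-balanced equilibrium of the new system.

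Activity of the other vertices is unchanged, because their source vectors are unchanged. So I can iterate over all inactive vertices and obtain $V' = V_a$, with $y' = y|_{V'}$ injective. Every remaining vertex still has a nonzero source vector, hence an outgoing edge, and so it is a source vertex. A complex-balanced system is weakly reversible, so $V'_s = V'$.

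The main obstacle is bookkeeping rather than any deep estimate. One must check that the new graph is again simple: merging parallel edges and dropping self-loops handles this. One must also check that no new edge acquires zero flux: all added fluxes are positive, so $k' > 0$. Finally, one must confirm that the balance identity uses $\sum_{i} v_{im} = o_m$ correctly once self-loops and previously removed vertices are taken into account. I would verify this last point by doing the induction on the number of inactive vertices, checking the flux identities only for the current system at each step.
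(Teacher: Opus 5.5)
Your proposal is correct and follows essentially the same route as the paper: you eliminate inactive vertices one at a time, redirect the flux $v_{im}$ into $m$ to the targets of $m$ with the convex weights $p_j = k_{mj}/\sum_{j'} k_{mj'}$ (equivalently $k'_{ij} = k_{ij} + k_{im}\,p_j$), check that the source vectors $\gamma^i$ are unchanged, and use complex balance at $m$ to show that the lost in-flux $v_{mj}$ is recovered. Your explicit treatment of the self-loops $i \to i$, which arise when both $i \to m$ and $m \to i$ are edges, covers a detail that the paper's formula for $k'_{ii}$ leaves implicit.
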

\begin{proof}
We proceed by iteratively eliminating inactive source vertices. 
Thus, it suffices to consider a single step. 
Indeed, we show that eliminating one inactive source vertex produces a dynamically equal, complex-balanced mass-action system.
Since the number of inactive source vertices is finite, repeated application of this construction yields the desired system.

Let $i^* \in V \, (= V_s)$ be an inactive source vertex of a complex-balanced mass-action system $(G_k,y)$ with $G=(V,E)$.
Further, let $V' = V \setminus \{i^* \}$ be the set of remaining (active or inactive) source vertices.
Then,
\[
0 = \gamma^{i^*}_k
= \sum_{(i^* \to j) \in E} k_{i^* \to j} \left( y(j)-y(i^*) \right)
= \sum_{j \in V'} k_{i^*j} \left( y(j)-y(i^*) \right) ,
\]
and hence the inactive source complex is the barycenter of its target complexes,
\[
y(i^*) = \sum_{j \in V'} \lambda_{i^*j} \, y(j) 
\quad
\text{with convex weights}
\quad
\lambda_{i^*j} = \frac{k_{i^*j}}{\sum_{j' \in V'} k_{i^*j'}} ,
\]
having the normalization $\sum_{j \in V'} \lambda_{i^*j} = 1$.
Now, 
let $i \in V'$. Then,
\begin{align*}
\gamma^i_k &= \sum_{j \in V} k_{ij} \left( y(j)-y(i) \right) \\
&= \sum_{j \in V'} k_{ij} \left( y(j)-y(i) \right) + k_{ii^*} \left( y(i^*)-y(i) \right) \\
&= \sum_{j \in V'} k_{ij} \left( y(j)-y(i) \right) + k_{ii^*} \sum_{j \in V'} \lambda_{i^*j} \left( y(j)-y(i) \right) \\
&= \sum_{j \in V'} \left( k_{ij} + k_{ii^*} \lambda_{i^*j} \right) \left( y(j)-y(i) \right) ,
\end{align*}
where we have used the barycenter formula for the inactive source complex and the normalization of convex weights in the third step.
We therefore define a mass-action system on $V'$ 
with $y' = y|_{V'}$, rate constants
\[
k'_{ij} = k_{ij} + k_{ii^*} \lambda_{i^*j} , \quad \text{for } i,j \in V' ,
\]
and hence edge set
\[
E' = \{i \to j \mid i,j \in V', \, k'_{ij}>0 \} .
\]
Dynamical equality holds by definition. Indeed, for $i \in V'$,
\[
\gamma^{i}_{k'} = \sum_{j \in V'} k'_{ij} \left( y(j)-y(i) \right) = \gamma^i_k .
\]
For the original system,
we denote by $v \in \R^{V \times V}_\ge$ the nonnegative matrix $v_{ij} = k_{ij} \, x^{y(i)}$, for $i,j \in V$,
and complex balance is given by $\sum_{j \in V} v_{ij} = \sum_{j \in V} v_{ji}$ for all $i \in V$.
(Since all graphs are simple, this is equivalent to $\sum_{j \in V\setminus\{i\}} v_{ij} = \sum_{j \in V\setminus\{i\}} v_{ji}$.)
Correspondingly, for the new system, we denote by $v' \in \R^{V' \times V'}_\ge$ the nonnegative matrix
\[
v'_{ij} = k'_{ij} \, x^{y(i)}, \quad \text{for } i,j \in V' .
\]
Clearly,
\begin{equation} \label{eq:dv} \tag{$\ast$}
v'_{ij} = \left( k_{ij} + k_{ii^*} \lambda_{i^*j} \right) x^{y(i)}
= v_{ij} + v_{ii^*} \lambda_{i^*j} .
\end{equation}
By assumption, the original system is complex-balanced.
It remains to show that also the new system (after elimination of $i^*$) is complex-balanced.
Indeed, we show that the eliminated in/out-fluxes from/to~$i^*$ are compensated by new fluxes within $V'$.

First, for $i \in V'$, the eliminated out-flux $v_{ii^*}$ to~$i^*$ is redirected to $j \in V'$ according to the weights $\lambda_{i^*j}$.
Explicitly, by Equation~\eqref{eq:dv},
\[
\sum_{j \in V'} v'_{ij} - v_{ij} = \sum_{j \in V'} v_{ii^*} \lambda_{i^*j} = v_{ii^*} ,
\]
where we have used the normalization of convex weights in the last step.

Second, for $i \in V'$, the eliminated in-flux $v_{i^*i}$ from~$i^*$ is also recovered.
Observe
\[
\lambda_{i^*i} = \frac{k_{i^*i}}{\sum_{j' \in V'} k_{i^*j'}}
= \frac{v_{i^*i}}{\sum_{j' \in V'} v_{i^*j'}} = \frac{v_{i^*i}}{\sum_{j' \in V'} v_{j'i^*}} ,
\]
where we used complex balance for $i^*$ in the last step.
Now,
\[
\sum_{j \in V'} v'_{ji} - v_{ji} = \sum_{j \in V'} v_{ji^*} \lambda_{i^*i} 
= \sum_{j \in V'} v_{ji^*} \frac{v_{i^*i}}{\sum_{j' \in V'} v_{j'i^*}}
= v_{i^*i}.
\]
Therefore, the total in/out-fluxes at every vertex $i \in V'$ are preserved. 
Since the original system is complex-balanced, also the new system is complex-balanced.

We iteratively eliminate all inactive source vertices until we obtain a dynamically equal, complex-balanced system with $V'=V_a$.
\end{proof}

The next result is an immediate consequence of Theorem~\ref{thm:inclusion}. Indeed, every dynamically equal, complex-balanced realization of a given mass-action system induces a dynamically equal, complex-balanced realization that uses only source complexes (equivalently, monomials) from the original system.

\begin{theorem}[\cite{CraciunJinYu2020}, Theorem~4.7] \label{thm:CJY2020}
Let $(G_k,y)$ with $G=(V,E)$ be a mass-action system.
Then, there exists a dynamically equal, complex-balanced mass-action system 
if and only if there exists a dynamically equal, complex-balanced mass-action system $(G'_{k'},y')$ with $G'=(V',E')$
such that $V' = V'_s \subseteq V_s \subseteq V$ and $y' = y|_{V'}$.
\end{theorem}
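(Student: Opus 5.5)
The ``if'' direction is trivial, since the system $(G'_{k'},y')$ on the right-hand side is itself a dynamically equal, complex-balanced mass-action system. For the ``only if'' direction, I would start from an arbitrary complex-balanced mass-action system $(G''_{k''},y'')$ with $G''=(V'',E'')$ such that $(G_k,y) \de (G''_{k''},y'')$. I would then apply Theorem~\ref{thm:inclusion} to it. This yields a dynamically equal, complex-balanced system $(G'_{k'},y')$ whose vertex set is exactly the set $V''_a$ of active source vertices of $(G''_{k''},y'')$, with $y' = y''|_{V''_a}$. Since $\de$ is clearly transitive (it is an equality of functions $x \mapsto \Gamma\, x^{Y_s}$), we get $(G_k,y) \de (G'_{k'},y')$. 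Complex balance of $(G'_{k'},y')$ forces weak reversibility, so $V' = V'_s$.

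The remaining step is to identify $V'$ with a subset of $V_s$ and $y'$ with $y|_{V'}$. The dynamical equality reads
\[
\sum_{i \in V_s} \gamma^i_k \, x^{y(i)} = \sum_{i \in V'} \gamma^i_{k'} \, x^{y'(i)} \quad \text{for all } x \in \R^n_> .
\]
Here all $\gamma^i_{k'} \neq 0$, because every source vertex of the reduced system is active: Theorem~\ref{thm:inclusion} preserves source vectors, so $\gamma^i_{k'} = \gamma^i_{k''} \neq 0$ for $i \in V''_a$. The complexes $y'(i)$, $i \in V'$, are pairwise distinct, since $y''$ is injective. The same holds for $y(i)$, $i \in V_s$, since $y$ is injective.

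I would then invoke the linear independence of distinct generalized monomials $x \mapsto x^{a}$, $a \in \R^n$, as functions on $\R^n_>$. One way to see this is to substitute $x = \e^{t}$, which turns them into distinct exponentials $\e^{a^\trans t}$, and these are linearly independent. Comparing coefficients, each exponent $y'(i)$ with $i \in V'$ has a nonzero coefficient on the right. Hence it must equal $y(j)$ for some $j \in V_s$, in fact with $\gamma^j_k = \gamma^i_{k'} \neq 0$. This $j$ is unique by injectivity of $y$. Relabeling each $i \in V'$ as the corresponding $j$ gives an isomorphic system with $V' \subseteq V_s$ and $y' = y|_{V'}$. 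The relabeling is injective because the $y'(i)$ are distinct, and it leaves the dynamics and complex balance unchanged.

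The main point requiring care is this last identification step: the vertex sets of the two realizations are a priori unrelated, and the relabeling must be justified via the linear independence of monomials with real exponents. The rest is a direct application of Theorem~\ref{thm:inclusion}.
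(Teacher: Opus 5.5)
Your proposal is correct and follows the paper's route: apply Theorem~\ref{thm:inclusion} to the given complex-balanced realization to discard its inactive sources, then use dynamical equality to match the remaining active source complexes with source complexes of $(G_k,y)$ and relabel. The paper calls this an immediate consequence and takes the matching step for granted (``by definition, \ldots\ without loss of generality''); your appeal to the linear independence of distinct generalized monomials, together with the injectivity of $y$ and $y''$, supplies the justification for that step.
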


\subsection{Disguised complex balance}

As mentioned above,
a mass-action system (a reaction network with rate constants) need not be complex-balanced, 
but it may have a dynamically equal realization that is complex-balanced (for other rate constants).

\begin{definition}
Let $(G,y)$ with $G=(V,E)$ be a reaction network. Then, its {\em disguised complex-balanced parameter locus} is given by 
\[
\mathcal{K}^\text{dCB}_{(G,y)} = 
\{ k \in \R^E_> \mid 
\exists (G',y') ,\, \exists k' \in \mathcal{K}^\text{CB}_{(G',y')} \text{ such that } 
(G_k,y) \de (G'_{k'},y') \} .
\]
\end{definition}

By complex balance, it suffices to consider weakly reversible graphs $G'$.
By Theorem~\ref{thm:CJY2020},
it suffices to consider graphs $G' = (V',E')$ with $V'_s \subseteq V_s$. 
For simplicity, we consider all subgraphs $G'$ of the complete simple directed graph $G^\text{com}_{V_s}$ on $V_s$.
(However, as noted in \cite[Section~3]{BorosEtAl2026}, the complete graph need not be minimal for the characterization of the disguised locus.)

\subsubsection*{From graphs to vectors (of edge labels)}

Let $G^\text{com}_{V_s} = (V_s, E^\text{com})$. 
Then every nonnegative vector $\bar k \in \R^{E^\text{com}}_\ge$ induces a graph $G'=(V',E')$ with 
\[
V' = \{ i \mid \exists j \text{ s.t. } \bar k_{i \to j}>0 \text{ or } \bar k_{j \to i}>0 \}
\quad \text{and} \quad
E' = \{ i \to j \mid \bar k_{i \to j}>0 \} ,
\]
a corresponding positive vector $k' \in \R^{E'}_>$,
and a mass-action system $(G'_{k'},y')$ with $y'=y|_{V'}$.
For $x \in \R^n_>$, the complex-balance equation takes the form $I_{E'} \, v_{k'}(x) = 0$ with $I_{E'} \in \{-1,0,1\}^{V' \times E'}$ and $v_{k'}(x) \in \R^{E'}_>$.
Notably,
the kinetic matrix $\Gamma_{(G'_{k'},y')} \in \R^{n \times V'_s}$
may have fewer columns than the kinetic matrix 
\[
\Gamma_k := \Gamma_{(G_k,y)} \in \R^{n \times V_s}
\]
of the mass-action system $(G_k,y)$, since $V'_s \subseteq V_s$ (that is, due to ``unused'' source vertices).
This construction provides a graph-theoretic interpretation of a nonnegative vector $\bar k \in \R^{E^\text{com}}_\ge$. 

For the analysis below, it is convenient to work directly on the complete graph $G^\text{com}_{V_s}$.
Thus, we symbolically compute once and for all the complex-balance equation $I_{E^\text{com}} \, v_{\bar k} (x) = 0$ and the kinetic matrix 
\[
\Gamma_{\bar k} := \Gamma_{((G^\text{com}_{V_s})_{\bar k},y_s)} \in \R^{n \times V_s} ,
\]
where $\bar k \in \R^{E^\text{com}}_>$.
Subsequently, we allow $\bar k \in \R^{E^\text{com}}_\ge$ such that vanishing entries automatically encode the corresponding subgraphs.
In this ambient representation, the kinetic matrices 
$\Gamma_k$ and $\Gamma_{\bar k}$
have the same number of columns,
since vanishing entries of $\bar k$ automatically produce zero columns corresponding to unused source vertices.
Consequently,
dynamical equality $(G_k,y) \de (G_{k'},y')$ is equivalent to the matrix equality
\[
\Gamma_k = \Gamma_{\bar k} .
\]

The following result summarizes the preceding constructions and provides an equivalent formulation of the disguised complex-balanced parameter locus in terms of the complete graph alone.

\begin{proposition} \label{prop:complete}
Let $(G,y)$ with $G=(V,E)$ be a reaction network,
and recall $G^\textnormal{com}_{V_s} = (V_s, E^\textnormal{com})$.
Then,
\[
\mathcal{K}^\textnormal{dCB}_{(G,y)} = 
\{ k \in \R^E_> \mid 
\exists x \in \R^n_> , \, 
\exists \bar k \in \R^{E^\textnormal{com}}_\ge
\textnormal{ such that } 
I_{E^\textnormal{com}} \, v_{\bar k} (x) = 0 
\textnormal{ and } 
\Gamma_k = \Gamma_{\bar k}
\} .
\]   
\end{proposition}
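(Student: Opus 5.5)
We prove the two inclusions separately. The main tool is Theorem~\ref{thm:CJY2020}, which reduces arbitrary complex-balanced realizations to those supported on $V_s$. We combine it with the embedding of subgraphs of $G^\textnormal{com}_{V_s}$ into nonnegative vectors $\bar k \in \R^{E^\textnormal{com}}_\ge$ described above.

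For ``$\subseteq$'', let $k \in \mathcal{K}^\textnormal{dCB}_{(G,y)}$, so some complex-balanced $(G''_{k''},y'')$ is dynamically equal to $(G_k,y)$. By Theorem~\ref{thm:CJY2020} we may replace it by a dynamically equal, complex-balanced $(G'_{k'},y')$ with $V'=V'_s\subseteq V_s$ and $y'=y|_{V'}$. Its edges are then edges of the complete simple digraph on $V_s$, since $G'$ is simple. We define $\bar k \in \R^{E^\textnormal{com}}_\ge$ by $\bar k_{i\to j}=k'_{i\to j}$ for $(i\to j)\in E'$ and $0$ otherwise.

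Let $x$ be a positive complex-balanced equilibrium of $(G'_{k'},y')$. Then $I_{E^\textnormal{com}} v_{\bar k}(x)=0$. At vertices of $V'$ this holds because the zero entries contribute nothing to the in- and out-fluxes. At vertices outside $V'$ it holds trivially, because all incident rates vanish.

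Likewise, $\Gamma_{\bar k}$ coincides with $\Gamma_{(G'_{k'},y')}$ padded by zero columns for $V_s\setminus V'$. Dynamical equality reads $\Gamma_k x^{Y_s}=\Gamma_{\bar k}x^{Y_s}$ for all $x>0$. Because $y$ is injective, the monomials $x^{y(i)}$, $i\in V_s$, are distinct. Distinct (generalized) monomials are linearly independent as functions on $\R^n_>$, which can be argued by restricting to curves $x=e^{t w}$ for generic $w$. Hence the identity of functions forces the matrix equality $\Gamma_k=\Gamma_{\bar k}$.

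For ``$\supseteq$'', take $x>0$ and $\bar k\ge0$ satisfying both conditions. If $\bar k = 0$, then $\Gamma_k=0$, and the dynamics is trivial. Any trivially complex-balanced system, for instance a single reversible pair $y(i)\rightleftarrows y(j)$ with rates chosen to balance at $x$, is then not dynamically equal to it. So this degenerate case needs care. Instead, one may allow the empty graph, which is vacuously complex-balanced, or note that the definition permits any $(G',y')$.

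If $\bar k \neq 0$, build $(G'_{k'},y')$ from $\bar k$ as in the construction above. The restriction of $I_{E^\textnormal{com}}v_{\bar k}(x)=0$ to $V'$ is exactly $I_{E'}v_{k'}(x)=0$, so $k'\in\mathcal{K}^\textnormal{CB}_{(G',y')}$. Moreover, $\Gamma_k=\Gamma_{\bar k}$ gives $\Gamma_{(G_k,y)}x^{Y_s}=\Gamma_{(G'_{k'},y')}x^{Y'_s}$ for all $x$, because the columns of $\Gamma_{\bar k}$ outside $V'_s$ vanish. Hence $k\in\mathcal{K}^\textnormal{dCB}_{(G,y)}$.

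The main obstacle is the bookkeeping in the first inclusion: justifying that dynamical equality (a functional identity) is equivalent to equality of kinetic matrices via linear independence of distinct monomials. This is the step I would write out carefully. The edge case $\bar k=0$ is a matter of convention, handled by admitting the empty realization.
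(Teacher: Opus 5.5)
Your proposal is correct and follows the same route as the paper. You reduce, via Theorem~\ref{thm:CJY2020}, to realizations on subsets of $V_s$ with $y'=y|_{V'}$, encode these as nonnegative vectors $\bar k$ on $G^\textnormal{com}_{V_s}$ (zero entries giving zero columns of $\Gamma_{\bar k}$), and identify dynamical equality with the matrix equality $\Gamma_k=\Gamma_{\bar k}$. You also make explicit two points the paper leaves implicit. The first is the linear independence of the distinct monomials $x^{y(i)}$, $i\in V_s$, which justifies passing from the functional identity to the matrix identity. The second is the degenerate case $\bar k=0$, which is resolved as you suggest: in the paper's construction the graph induced by $\bar k=0$ is the empty graph, which is vacuously complex-balanced.
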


\subsection{From disguised complex balance to positive algebraic geometry}

We aim to reformulate the disguised complex-balanced parameter locus
as a system of linear constraints for the vector of monomial terms.
To this end, we introduce nonnegative variables $\nu \in \R^E_\ge$ and $\bar\nu \in \R^{E^\text{com}}_\ge$
such that
\[
\nu= v_k(x)
\quad \text{and} \quad
\bar\nu = v_{\bar k} (x) .
\]
Then, (i) the complex-balance equation becomes a linear constraint, $I_{E^\text{com}} \, \bar\nu = 0$.
As we will show, (ii) dynamical equality becomes $\Gamma_\nu= \Gamma_{\bar\nu}$,
and (iii) $\bar k$ can be eliminated subsequently.
(Note that we allow $\nu\in \R^E_\ge$, whereas $v_k(x) \in \R^E_>$.
This relaxation will allow us to use polyhedral geometry.)

First, we consider ``kinetic matrices'' of ``mass-action systems'', arising from a reaction network and {\em nonnegative} rate constants, such as $\Gamma_{\bar k}$ for $G^\text{com}_{V_s}$.
(For positive parameters, these are the ``standard'' kinetic matrices of mass-action systems.)

\begin{lemma} \label{lem:IEs}
Let $(G,y)$ with $G=(V,E)$ be a reaction network, $k \in \R^E_\ge$ be nonnegative parameters,
$\Gamma_k = Y I_E \diag(k) \, I_{E,s}^\trans \in \R^{n \times V_s}$,
and $v_k(x) = k \circ x^{Y_s I_{E,s}} \in \R^E_\ge$, for $x \in \R^n_>$.
Then,
\[
\Gamma_k \diag (x^{Y_s}) = \Gamma_{v_k(x)} ,
\]
where $\Gamma_{v_k(x)}$ is obtained by replacing $k$ with $v_k(x)$ in the definition of $\Gamma_k$.
\end{lemma}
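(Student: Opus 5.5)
The identity is a matrix computation, and I will verify it by rewriting $\Gamma_k \diag(x^{Y_s})$ so that the diagonal factor passes through $I_{E,s}^\trans$ and merges with $\diag(k)$. Since $\Gamma_k = Y I_E \diag(k)\, I_{E,s}^\trans$, it suffices to show the middle identity
\[
\diag(k)\, I_{E,s}^\trans \diag(x^{Y_s}) = \diag\bigl(v_k(x)\bigr)\, I_{E,s}^\trans ,
\]
since multiplying both sides on the left by $Y I_E$ gives the claim. Here $\Gamma_{v_k(x)} = Y I_E \diag(v_k(x))\, I_{E,s}^\trans$ is the kinetic-matrix formula with $k$ replaced by the nonnegative vector $v_k(x)$.

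The key step is the commutation rule $I_{E,s}^\trans \diag(x^{Y_s}) = \diag\bigl(x^{Y_s I_{E,s}}\bigr)\, I_{E,s}^\trans$. I will check it entrywise. For an edge $e = (i \to j) \in E$ and a source vertex $l \in V_s$, the $(e,l)$ entry of $I_{E,s}^\trans$ is $1$ if $l = i$ and $0$ otherwise. So the $(e,l)$ entry of the left side is $[l=i]\, x^{y(l)} = [l=i]\, x^{y(i)}$.

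On the right, the $e$-th column of $Y_s I_{E,s}$ is $y(i)$, because the $e$-th column of $I_{E,s}$ is the indicator vector of the source of $e$. Hence $(x^{Y_s I_{E,s}})_e = x^{y(i)}$, and the $(e,l)$ entry of the right side is again $x^{y(i)}[l=i]$.

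With the commutation rule in hand, the middle identity follows because diagonal matrices commute:
\[
\diag(k)\,\diag\bigl(x^{Y_s I_{E,s}}\bigr) = \diag\bigl(k \circ x^{Y_s I_{E,s}}\bigr) = \diag\bigl(v_k(x)\bigr).
\]

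There is no real obstacle. The only point needing care is the index bookkeeping: the source matrix $I_{E,s}$ selects the source vertex of each edge, so the monomial attached to an edge is exactly the source monomial. Nonnegativity of $k$, as opposed to positivity, plays no role, since the argument is purely algebraic.

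Column by column, the identity says that scaling the source vector $\gamma^i_k$ by $x^{y(i)}$ gives $\sum_{(i\to j)\in E} k_{i\to j}\, x^{y(i)} \bigl(y(j)-y(i)\bigr)$, which is the same sum with rates $v_k(x)$. I would mention this as an alternative verification.
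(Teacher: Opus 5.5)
Your proof is correct and follows essentially the same route as the paper. The paper also commutes the diagonal factor through the source matrix, via $\diag(z)\, I_{E,s} = I_{E,s} \diag(z^{I_{E,s}})$ transposed with $z = x^{Y_s}$, and then merges $\diag(k)\diag(x^{Y_s I_{E,s}}) = \diag(v_k(x))$; your entrywise check of the commutation rule simply makes explicit the paper's remark that the components of $z$ are duplicated along edges.
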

\begin{proof}
Recall that every column (indexed by an edge) of the source matrix $I_{E,s} \in \{0,1\}^{V_s \times E}$ contains exactly one nonzero entry, namely in the row corresponding to the source vertex of the edge.
It is easy to see that, for $z \in \R^{V_s}$,
\[
\diag(z) \, I_{E,s}
=
I_{E,s} \, \diag(z^{I_{E,s}}) .
\]
Just observe that
multiplying $I_{E,s}$ with $\diag(z)$ from the left,
that is, multiplying every row with a component of $z$,
yields the same result as multiplying with $\diag(z^{I_{E,s}})$ from the right,
where components of $z$ have been ``duplicated'' accordingly.
Using (the transpose of) this identity,
\begin{align*}
\Gamma_k \diag(x^{Y_s})
&=
Y I_E \diag(k) \, I_{E,s}^\trans \diag(x^{Y_s}) \\
&=
Y I_E \diag(k)\diag(x^{Y_s I_{E,s}}) I_{E,s}^\trans \\
&=
Y I_E \diag(v_k(x)) \, I_{E,s}^\trans \\
&=
\Gamma_{v_k(x)}.
\end{align*}
\end{proof}

Next, we address equality between the kinetic matrix of a mass-action system and the ``kinetic matrix'' of a corresponding reaction network (such as the complete graph on the source vertices of the mass-action system) and nonnegative parameters.

\begin{lemma} \label{lem:Gammas}
Let $(G_k,y)$ with $G=(V,E)$ be a mass-action system and $x \in \R^n_>$.
Further,
let $(G',y')$ with $G'=(V',E')$ be a reaction network with $V'_s=V_s$ and $y'=y_s$, $k' \in \R^{E'}_\ge$ be nonnegative parameters,
$\Gamma_{k'} = Y I_{E'} \diag(k') \, I_{E',s}^\trans \in \R^{n \times V_s}$,
and $v_{k'}(x) = k' \circ x^{Y_s I_{E',s}} \in \R^{E'}_\ge$.
Then,
\[
\Gamma_k = \Gamma_{k'}
\quad \Longleftrightarrow \quad
\Gamma_{v_k(x)} = \Gamma_{v_{k'}(x)} .
\]
\end{lemma}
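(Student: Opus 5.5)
The plan is to reduce the equivalence to Lemma~\ref{lem:IEs}, applied once to each side, together with the invertibility of the diagonal matrix $\diag(x^{Y_s})$. Both kinetic matrices $\Gamma_k$ and $\Gamma_{k'}$ have columns indexed by the same set $V_s$: by assumption $V'_s=V_s$ and $y'=y_s$. So both matrices live in $\R^{n \times V_s}$ and the same diagonal matrix can act on each.

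First, apply Lemma~\ref{lem:IEs} to the mass-action system $(G_k,y)$. Its parameters are positive, hence in particular nonnegative, so we get $\Gamma_k \diag(x^{Y_s}) = \Gamma_{v_k(x)}$.

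Second, apply Lemma~\ref{lem:IEs} to the reaction network $(G',y')$ with the nonnegative parameters $k'$. Since $y'=y_s$ and $V'_s=V_s$, the source complex matrix of $G'$ is exactly $Y_s$. The lemma therefore gives $\Gamma_{k'} \diag(x^{Y_s}) = \Gamma_{v_{k'}(x)}$, with the same diagonal matrix as in the first step. The one point to check is that the lemma allows the complex matrix in the definition of $\Gamma_{k'}$ to be the one indexed by the vertices of $G'$. This is harmless: the proof of Lemma~\ref{lem:IEs} only uses the identity $\diag(z)\, I_{E',s} = I_{E',s} \diag(z^{I_{E',s}})$ for the source matrix of $G'$.

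Third, since $x \in \R^n_>$, every monomial $x^{y(i)}$ is positive, so $D=\diag(x^{Y_s})$ is invertible. If $\Gamma_k=\Gamma_{k'}$, multiplying on the right by $D$ gives $\Gamma_{v_k(x)}=\Gamma_{v_{k'}(x)}$. Conversely, if $\Gamma_{v_k(x)}=\Gamma_{v_{k'}(x)}$, then $\Gamma_k D = \Gamma_{k'} D$, and multiplying by $D^{-1}$ gives $\Gamma_k=\Gamma_{k'}$.

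There is no real obstacle in this argument. The only care needed is the bookkeeping of index sets: confirming that both $\Gamma$'s are indexed by the same $V_s$ and that the same monomial vector $x^{Y_s}$ appears on both sides, which is exactly what the hypotheses $V'_s=V_s$ and $y'=y_s$ guarantee.
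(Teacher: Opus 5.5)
Your proposal is correct and follows the paper's proof: apply Lemma~\ref{lem:IEs} to both $(G_k,y)$ and $(G',y')$ to get $\Gamma_k \diag(x^{Y_s}) = \Gamma_{v_k(x)}$ and $\Gamma_{k'} \diag(x^{Y_s}) = \Gamma_{v_{k'}(x)}$, then use that $\diag(x^{Y_s})$ is invertible for $x \in \R^n_>$. Your index-set bookkeeping ($V'_s=V_s$, $y'=y_s$ giving the same source matrix $Y_s$) fills in what the paper leaves implicit.
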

\begin{proof}
By Lemma~\ref{lem:IEs},
\[
\Gamma_k\diag(x^{Y_s})
=
\Gamma_{v_k(x)}
\quad\text{and}\quad
\Gamma_{k'}\diag(x^{Y_s})
=
\Gamma_{v_{k'}(x)} .
\]
Since $\diag(x^{Y_s})$ is invertible for $x \in \R^n_>$,
the claim follows.
\end{proof}

As announced, 
we can now reformulate the disguised complex-balanced parameter locus.
Starting from Proposition~\ref{prop:complete},
we (i) introduce new variables
and (ii) use Lemma~\ref{lem:Gammas},
\begin{alignat*}{3}
\mathcal{K}^\text{dCB}_{(G,y)} &= 
\{ k \in \R^E_> \mid 
\exists x \in \R^n_> , \; &&
\exists \bar k \in \R^{E^\text{com}}_\ge 
\colon 
I_{E^\text{com}} \, v_{\bar k} (x) = 0 \wedge 
\Gamma_k = \Gamma_{\bar k} \} \\
&\stackrel{\textrm{(i)}}{=} \{ k \in \R^E_> \mid 
\exists x \in \R^n_> , &&
\exists \bar k \in \R^{E^\text{com}}_\ge , \,
\exists \nu\in \R^E_\ge , \, 
\exists \bar\nu \in \R^{E^\text{com}}_\ge \colon \\
&&&
\nu= v_k(x) \wedge
\bar\nu = v_{\bar k}(x) \wedge
I_{E^\text{com}} \, v_{\bar k} (x) = 0 \wedge 
\Gamma_k = \Gamma_{\bar k} \} \\
&\stackrel{\textrm{(ii)}}{=} \{ k \in \R^E_> \mid 
\exists x \in \R^n_> , &&
\exists \bar k \in \R^{E^\text{com}}_\ge , \,
\exists \nu\in \R^E_\ge , \, 
\exists \bar\nu \in \R^{E^\text{com}}_\ge \colon \\
&&&
\nu= v_k(x) \wedge
\bar\nu = v_{\bar k}(x) \wedge
I_{E^\text{com}} \, \bar\nu = 0 \wedge 
\Gamma_\nu= \Gamma_{\bar\nu} \} \\
&\stackrel{\textrm{(iii)}}{=} \{ k \in \R^E_> \mid 
\exists x \in \R^n_> , &&
\exists \nu\in \R^E_\ge , \, 
\exists \bar\nu \in \R^{E^\text{com}}_\ge \colon 
\nu= v_k(x) \wedge
I_{E^\text{com}} \, \bar\nu = 0 \wedge 
\Gamma_\nu= \Gamma_{\bar\nu} \} .
\end{alignat*}
Finally, in (iii), we have eliminated $\bar k$,
since the only constraint involving $\bar k$, 
\[
\bar\nu = v_{\bar k}(x) = \bar k \circ x^{Y_s I_{E^\text{com},s}} ,
\]
determines a unique $\bar k$ for any $x$ and $\bar\nu$.

The following result summarizes the preceding variable introduction/elimination process and provides an equivalent formulation
of the disguised complex-balanced parameter locus. 

\begin{proposition}
Let $(G,y)$ with $G=(V,E)$ be a reaction network,
and recall $G^\textnormal{com}_{V_s} = (V_s, E^\textnormal{com})$.
Then,
\[
\mathcal{K}^\textnormal{dCB}_{(G,y)} = 
\{ k \in \R^E_> \mid 
\exists x \in \R^n_> , \,
\exists \nu\in \R^E_\ge , \, 
\exists \bar\nu \in \R^{E^\textnormal{com}}_\ge
\textnormal{ such that } 
\nu= v_k(x) , \,
I_{E^\textnormal{com}} \, \bar\nu = 0 , \,
\textnormal{and } 
\Gamma_\nu= \Gamma_{\bar\nu} 
\} .
\]   
\end{proposition}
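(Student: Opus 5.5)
The statement only repackages the chain of set equalities (i)--(iii) displayed just before it. I would make each step rigorous as a set equality, starting from Proposition~\ref{prop:complete}.

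\textbf{Step (i).} Introduce the auxiliary variables $\nu$ and $\bar\nu$ together with the defining constraints $\nu = v_k(x)$ and $\bar\nu = v_{\bar k}(x)$. For fixed $k$, $x$ and $\bar k$, these constraints have exactly one solution, namely $\nu = v_k(x) \in \R^E_>$ and $\bar\nu = v_{\bar k}(x) \in \R^{E^\text{com}}_\ge$. Both lie in the nonnegative orthants over which we quantify, since $k>0$, $\bar k \ge 0$ and $x>0$. Hence adding the existential quantifiers over $\nu$ and $\bar\nu$ does not change the set.

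\textbf{Step (ii).} Under the constraint $\bar\nu = v_{\bar k}(x)$, the equation $I_{E^\text{com}} v_{\bar k}(x) = 0$ becomes $I_{E^\text{com}} \bar\nu = 0$ by direct substitution. For the second equation I apply Lemma~\ref{lem:Gammas} with $G' = G^\text{com}_{V_s}$, $k' = \bar k$ and $y' = y_s$. This gives $\Gamma_k = \Gamma_{\bar k} \iff \Gamma_{v_k(x)} = \Gamma_{v_{\bar k}(x)}$, and after substitution the right-hand side reads $\Gamma_\nu = \Gamma_{\bar\nu}$.

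One point needs care here: Lemma~\ref{lem:Gammas} assumes $V'_s = V_s$. The complete simple digraph on $V_s$ has every vertex as a source as soon as $|V_s| \ge 2$. The degenerate case $|V_s| \le 1$ (where $E^\text{com} = \emptyset$) can be handled directly. With a single source, $\Gamma_k$ is its source vector, and $\Gamma_k = \Gamma_{\bar k} = 0$ holds exactly when $\Gamma_\nu = \Gamma_{\bar\nu} = 0$, because $\Gamma_\nu = \Gamma_k \diag(x^{Y_s})$ by Lemma~\ref{lem:IEs}.

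I also need to check that Lemma~\ref{lem:IEs} and Lemma~\ref{lem:Gammas} allow nonnegative parameters, so that zero entries of $\bar k$ are permitted. They do: the lemmas are stated for $k' \in \R^{E'}_\ge$. The key fact in the proof is that $\diag(x^{Y_s})$ is invertible.

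\textbf{Step (iii).} Eliminate $\bar k$. The only remaining constraint involving $\bar k$ is $\bar\nu = \bar k \circ x^{Y_s I_{E^\text{com},s}}$. Given $x \in \R^n_>$ and any $\bar\nu \in \R^{E^\text{com}}_\ge$, define
\[
\bar k = \bar\nu \circ x^{-Y_s I_{E^\text{com},s}} \in \R^{E^\text{com}}_\ge ,
\]
which is well defined because all monomials are positive. This $\bar k$ is the unique solution of the constraint, and it is nonnegative. Therefore $\exists \bar k \ge 0$ together with this constraint is equivalent to \emph{true}, and dropping it yields exactly the claimed set.

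Both inclusions follow from this bookkeeping. For "$\subseteq$", take the witnesses $x$ and $\bar k$ from Proposition~\ref{prop:complete} and set $\nu = v_k(x)$, $\bar\nu = v_{\bar k}(x)$. For "$\supseteq$", take the witnesses $x$, $\nu$, $\bar\nu$ and recover $\bar k$ as above; Lemma~\ref{lem:Gammas} then converts $\Gamma_\nu = \Gamma_{\bar\nu}$ back into $\Gamma_k = \Gamma_{\bar k}$.

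\textbf{Main obstacle.} The only real subtlety is in step (ii): checking that Lemma~\ref{lem:Gammas} applies to the complete graph with possibly vanishing parameters, so that $V'_s = V_s$ and the ambient-representation conventions match. Everything else is substitution.
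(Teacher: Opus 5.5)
Your proposal is correct and follows the paper's argument: the chain (i)--(iii) starting from Proposition~\ref{prop:complete}, namely introducing $\nu,\bar\nu$, rewriting dynamical equality via Lemma~\ref{lem:Gammas}, and eliminating $\bar k$ because $\bar\nu = \bar k \circ x^{Y_s I_{E^\text{com},s}}$ determines it uniquely. Your extra checks make explicit details the paper leaves implicit: that the recovered $\bar k$ is nonnegative, that $V'_s = V_s$ for the complete graph when $|V_s|\ge 2$, and the degenerate case $|V_s|\le 1$.
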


Finally, we eliminate also the auxiliary variables $\nu, \bar\nu$ from the linear equations for complex balance and dynamical equality.
First, we introduce the polyhedral cone
\[
C^\text{s} = \{ (\nu,\bar\nu) \in \R^E_\ge \times \R^{E^\text{com}}_\ge \mid 
I_{E^\text{com}} \, \bar\nu = 0 
\text{ and } 
\Gamma_\nu= \Gamma_{\bar\nu}
\} ,
\]
which is an ``s-cone''~\cite{MuellerRegensburger2016} (arising from a linear subspace and nonnegativity constraints).
Then, its projection to the $\nu$-variables is another polyhedral cone,
which is a crucial geometric object for disguised complex-balance.

\begin{definition} \label{def:CdCB}
Let $(G,y)$ with $G=(V,E)$ be a reaction network,
and recall $G^\text{com}_{V_s} = (V_s, E^\text{com})$.
The {\em disguised complex-balanced flux cone} is given by
\[
\mathcal{C}^\text{dCB}_{(G,y)} 
= \{ \nu\in \R^E_\ge \mid 
\exists \bar\nu \in \R^{E^\text{com}}_\ge
\text{ such that } 
I_{E^\text{com}} \, \bar\nu = 0 
\text{ and } 
\Gamma_\nu= \Gamma_{\bar\nu} 
\} .
\]
\end{definition}
By complex balance and dynamical equivalence,
$\mathcal{C}^\text{dCB}_{(G,y)}$ is a subcone of the flux cone 
\[
\mathcal{C}^\text{flx}_{(G,y)} = \{ \nu\in \R^E_\ge \mid N \nu= 0 \} ,
\]
which motivates its name.

Now,
\begin{equation*} 
\begin{aligned}
\mathcal{K}^\text{dCB}_{(G,y)} &= \{ k \in \R^E_> \mid 
\exists x \in \R^n_> ,
\exists \nu\in \R^E_\ge , \, 
\exists \bar\nu \in \R^{E^\text{com}}_\ge \colon 
\nu= v_k(x) \wedge
I_{E^\text{com}} \, \bar\nu = 0 \wedge 
\Gamma_\nu= \Gamma_{\bar\nu} \} \\
&= \{ k \in \R^E_> \mid 
\exists x \in \R^n_> ,
\exists \nu\in \R^E_\ge \colon 
\nu= v_k(x) \wedge
\nu\in \mathcal{C}^\text{dCB}_{(G,y)} \} \\
&= \{ k \in \R^E_> \mid 
\exists x \in \R^n_> \colon 
v_k(x) \in \mathcal{C}^\text{dCB}_{(G,y)}
\} .
\end{aligned}
\end{equation*}
Since $v_k(x) \in \mathbb{R}^E_{>}$ for every $k \in \mathbb{R}^E_{>}$ and $x \in \mathbb{R}^n_{>}$, the membership condition is equivalently restricted to the positive part of the cone.
Hence, we can state the desired reformulation of the disguised complex-balanced parameter locus in two equivalent ways.
In addition to the abstract formulations, we also state the locus as an explicit quantifier elimination problem.

\begin{theorem}[Polyhedral geometry of $\mathcal{K}^\textnormal{dCB}_{(G,y)}$] \label{thm:KdCB_x}
Let $(G,y)$ with $G=(V,E)$ be a reaction network
and $\mathcal{C} := \mathcal{C}^\textnormal{dCB}_{(G,y)}$.
Then,
\begin{align*}
\mathcal{K}^\textnormal{dCB}_{(G,y)} 
&= \{ k \in \R^E_> \mid 
\exists x \in \R^n_> 
\textnormal{ such that } 
v_k(x) \in \mathcal{C} \} \\
&= \{ k \in \R^E_> \mid 
\exists x \in \R^n_> 
\textnormal{ such that } 
v_k(x) \in \mathcal{C}_> \} .
\end{align*}
Stating all variables explicitly,
\begin{align*}
\mathcal{K}^\textnormal{dCB}_{(G,y)} 
&= \{ k \in \R^E_> \mid 
\exists x \in \R^n_> ,
\exists \nu\in \R^E_> \textnormal{ such that } 
\nu= v_k(x) \textnormal{ and }
\nu\in \mathcal{C} \} .
\end{align*}
\end{theorem}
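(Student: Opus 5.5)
The plan is to assemble the chain of set equalities already derived in the preceding discussion, and to justify each link from the earlier results. I will start from Proposition~\ref{prop:complete}, which characterizes $\mathcal{K}^\text{dCB}_{(G,y)}$ through $x \in \R^n_>$ and $\bar k \in \R^{E^\text{com}}_\ge$ with $I_{E^\text{com}} \, v_{\bar k}(x) = 0$ and $\Gamma_k = \Gamma_{\bar k}$. I then introduce $\nu = v_k(x)$ and $\bar\nu = v_{\bar k}(x)$. This turns complex balance into the linear condition $I_{E^\text{com}} \, \bar\nu = 0$. By Lemma~\ref{lem:Gammas}, applied with $G' = G^\text{com}_{V_s}$ (which has $V'_s = V_s$, at least when $\bar k > 0$; for general $\bar k \ge 0$ the ambient representation with zero columns plays the same role, and Lemma~\ref{lem:IEs} holds for nonnegative parameters), $\Gamma_k = \Gamma_{\bar k}$ is equivalent to $\Gamma_\nu = \Gamma_{\bar\nu}$, since $\diag(x^{Y_s})$ is invertible.

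Next I eliminate $\bar k$. I will argue both inclusions. Given $x$, $\nu = v_k(x)$ and any $\bar\nu \ge 0$ satisfying the linear constraints, I define $\bar k = \bar\nu \circ x^{-Y_s I_{E^\text{com},s}} \ge 0$. This recovers $\bar\nu = v_{\bar k}(x)$, so the conditions of Proposition~\ref{prop:complete} hold. The converse direction is immediate. The existence of $\bar\nu$ is then by definition the membership $\nu \in \mathcal{C}^\text{dCB}_{(G,y)}$ (Definition~\ref{def:CdCB}), which gives the first equality. The third formulation follows by substituting $\nu = v_k(x)$. Its restriction $\nu \in \R^E_>$ costs nothing, because $\nu = v_k(x)$ is automatically positive.

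For the second equality I use that $k \in \R^E_>$ and $x \in \R^n_>$ give $v_k(x) \in \R^E_>$. Hence $v_k(x) \in \mathcal{C}$ if and only if $v_k(x) \in \mathcal{C} \cap \R^E_> = \mathcal{C}_>$.

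The only step needing care is the application of Lemma~\ref{lem:Gammas} to nonnegative $\bar k$ on the complete graph. When entries of $\bar k$ vanish, the relevant source set must still be all of $V_s$ so that the matrices have matching columns. I would address this directly through Lemma~\ref{lem:IEs}: right-multiplying both kinetic matrices in $\R^{n \times V_s}$ by the invertible diagonal matrix $\diag(x^{Y_s})$ shows the equivalence without ever passing to subgraphs.
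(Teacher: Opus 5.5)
Your proposal is correct and follows the paper's own derivation step for step. You start from Proposition~\ref{prop:complete}, introduce $\nu = v_k(x)$ and $\bar\nu = v_{\bar k}(x)$, turn $\Gamma_k = \Gamma_{\bar k}$ into $\Gamma_\nu = \Gamma_{\bar\nu}$ via Lemma~\ref{lem:Gammas}, eliminate $\bar k$ because $\bar\nu = v_{\bar k}(x)$ determines it uniquely for given $x$, recognize the definition of $\mathcal{C}^\text{dCB}_{(G,y)}$, and use positivity of $v_k(x)$ to pass to $\mathcal{C}_>$; your extra care about vanishing entries of $\bar k$ is harmless but unnecessary, since Lemma~\ref{lem:Gammas} is already stated for nonnegative parameters in exactly this ambient representation with columns indexed by $V_s$.
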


Theorem~\ref{thm:KdCB_x} expresses the disguised complex-balanced parameter locus as a system of generalized polynomial inequalities~\cite{MuellerRegensburger2023a}.
Indeed, with $\mathcal{C} := \mathcal{C}^\textnormal{dCB}_{(G,y)}$, the condition
\[
v_k(x) = \left( k \circ x^{Y_s I_{E,s}} \right) \in \mathcal{C}_>
\]
is precisely of the form 
\[
\left( c \circ x^B \right) \in C ,
\]
with positive parameter vector $c=k$, exponent matrix $B=Y_s I_{E,s}$, and positive cone $C=\mathcal{C}_>$.
Hence, the disguised complex-balanced parameter locus fits into the framework for generalized polynomial inequalities developed in~\cite{MuellerRegensburger2023a}.

\begin{remark}
As noted above, the complete graph need not be minimal for the characterization of disguised complex balance.
A natural criterion for retaining an edge $e \in E^\text{com}$ is that
$\bar\nu_e \not\equiv 0$ on the s-cone $C^\text{s}$.
Since Theorem~\ref{thm:KdCB_x} effectively depends only on the positive part of $\mathcal{C}^\text{dCB}_{(G,y)}$,
a more restrictive criterion asks whether there exists a feasible pair
$(\nu,\bar\nu)\in C^\text{s}$ such that
\[
\nu>0
\quad\text{and}\quad
\bar\nu_e>0.
\]
This leads to the graph of relevant edges $G^\text{rel}_{V_s} = (V_s,E^\text{rel})$, referred to as the maximal graph~$G^\text{max}$ in~\cite{BorosEtAl2026}.
\end{remark}

\subsection{Positive algebraic geometry}

We briefly summarize the main results from~\cite{MuellerRegensburger2023a} on generalized polynomial inequalities 
that are needed below.

\begin{theorem}[\cite{MuellerRegensburger2023a}, Theorem 5 and Proposition 6] \label{thm:previous}
Consider the parametrized system of generalized polynomial inequalities $(c \circ x^B) \in C$ for positive variables $x \in \R^n_>$,
given by a real exponent matrix $B \in \R^{n \times m}$, 
a positive parameter vector $c \in \R^m_>$,
and positive coefficient cone $C \subseteq \R^m_>$.
Then, the solution set $\mathcal{X}_c = \{ x \in \R^n_> \mid (c \circ x^B) \in C \}$ can be written as
\[
\mathcal{X}_c = \{ (y \, \circ\, c^{-1})^E \mid y \in \mathcal{Y}_c \} \circ \e^{L^\perp} ,
\]
where
\[
\mathcal{Y}_c = \{ y \in P \mid 
y^z = c^z \text{ for all } z \in D \} 
\]
is the solution set on the coefficient polytope $P$,
further $D$ and $L$ are the monomial dependency and difference subspaces, respectively,
and $E$ is a generalized inverse.

In particular, there is a bijection between $\mathcal{X}_c / \e^{L^\perp}$ and $\mathcal{Y}_c$.
\end{theorem}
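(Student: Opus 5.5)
The plan is to reduce the inequality system to binomial equations, using two facts. First, the coefficient cone is invariant under positive scaling. Second, the image of the monomial map $x \mapsto x^B$ is, after taking logarithms, an affine-linear object.

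I would first fix the objects named in the statement, since they are only cited here. Let $L = \operatorname{span}\{ b(j) - b(j') \mid j,j' \}$ be the monomial difference subspace spanned by differences of columns of $B$. Let $D = \ker \binom{B}{\ones^\trans} \subseteq \R^m$ be the monomial dependency subspace. Let $P = C \cap \Delta$ be the coefficient polytope, where $\Delta$ is the standard simplex, i.e.\ $\{ y \in \R^m_> \mid \ones^\trans y = 1\}$ (up to the normalization used in the reference). A short linear-algebra check gives
\[
D^\perp = \im B^\trans + \R \ones ,
\qquad
\ker\bigl(B^\trans\bigr) + \bigl(\text{vectors } u \text{ with } B^\trans u \in \R\ones\bigr) = L^\perp .
\]
The first identity holds because $D$ is the kernel of the stacked matrix. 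The second holds because $u \in L^\perp$ iff $u^\trans b(j)$ is independent of $j$.

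The key steps, in order, are as follows.

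\textbf{Step 1 (forward direction).} Take $x \in \mathcal{X}_c$. Set $w = c \circ x^B \in C$ and $y = w / (\ones^\trans w)$. Since $C$ is a cone, $y \in C$, and by construction $y \in P$. Moreover,
\[
\log y - \log c = B^\trans \log x - \log(\ones^\trans w)\, \ones \in D^\perp .
\]
Hence $z^\trans(\log y - \log c) = 0$, that is, $y^z = c^z$, for all $z \in D$. So $y \in \mathcal{Y}_c$.

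\textbf{Step 2 (backward direction).} Take $y \in \mathcal{Y}_c$. Then $\log(y \circ c^{-1}) \in D^{\perp\perp} = D^\perp = \im B^\trans + \R\ones$. Choose $E$ to be a generalized inverse adapted to $B^\trans$ modulo $\R\ones$, so that
\[
B^\trans E \log(y \circ c^{-1}) \equiv \log(y \circ c^{-1}) \pmod{\R\ones} .
\]
Then $x = (y \circ c^{-1})^E$ satisfies $c \circ x^B = t\, y$ for some $t > 0$, so $c \circ x^B \in C$ by scaling invariance.

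\textbf{Step 3 (fibres).} Two solutions $x, x'$ yield the same normalized $y$ iff $x^B$ and $x'^B$ are proportional. This holds iff $B^\trans(\log x - \log x') \in \R\ones$, which is iff $\log x - \log x' \in L^\perp$. This gives $\mathcal{X}_c = \{ (y \circ c^{-1})^E \mid y \in \mathcal{Y}_c \} \circ \e^{L^\perp}$. The induced map $\mathcal{X}_c/\e^{L^\perp} \to \mathcal{Y}_c$ is well defined and injective by the fibre computation, and surjective by Step 2. Hence it is a bijection.

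The main obstacle is the precise construction of $E$ and the exact normalization defining $P$. The claim $\log(y \circ c^{-1}) \in \im B^\trans + \R\ones$ only gives a solution up to the $\ones$-direction, so $E$ must be chosen compatibly with both $B^\trans$ and $\ones$. I would first try the following: pick a basis of $\im B^\trans + \R\ones$, extend the matrix to $\binom{B}{\ones^\trans}$, and take a generalized inverse of its transpose. Then project away the auxiliary last coordinate, checking that the result is independent of choices modulo $L^\perp$.
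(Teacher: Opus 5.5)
Your proposal is correct and is essentially the argument behind the cited result of M\"uller--Regensburger. It normalizes $c\circ x^B$ into $P$ using scaling invariance of $C$ and takes logarithms, so that $c\circ x^B = t\,y$ becomes the linear system $B^\trans \log x - (\log t)\ones = \log(y\circ c^{-1})$, solvable iff $\log(y\circ c^{-1}) \in \im \mathcal{B}^\trans = D^\perp$ (the binomial conditions); it then solves this with a generalized inverse of $\mathcal{B} = \binom{B}{\ones^\trans}$ whose first $n$ columns form $E$, and identifies the projection of $\ker \mathcal{B}^\trans$ onto the $x$-coordinates with $L^\perp$. Your proposed construction of $E$ is exactly this, and the independence check you mention is unnecessary, since your fibre argument absorbs the choice.

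Only cosmetic slips remain:
\begin{enumerate}[(i)]
\item The conditions $y^z=c^z$ for $z\in D$ give $\log(y\circ c^{-1})\in D^\perp$ directly, not via $D^{\perp\perp}$, which equals $D$.
\item With the convention $u^E$ for $E\in\R^{m\times n}$, the defining property should read $B^\trans E^\trans \log(y\circ c^{-1}) \equiv \log(y\circ c^{-1}) \pmod{\R\ones}$.
\item You should state explicitly that $\mathcal{X}_c\circ \e^{L^\perp}=\mathcal{X}_c$, which follows from the same scaling computation as your Step 2.
\end{enumerate}
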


The relevant geometric objects, namely the coefficient polytope \(P\), the monomial dependency and difference subspaces \(D\) and \(L\), respectively, as well as a generalized inverse \(E\), are introduced in~\cite[Section~2]{MuellerRegensburger2023a}. 

For existence questions, only the binomial equations on $P$, determined by $D$ are relevant.
The exponential manifold induced by $L$, and the reconstruction via $E$ are not required.


For notational simplicity, we assume that $C$ is not a direct product of cones.
Then,
\[
P = C \cap \Delta
\]
where $\Delta$ is the simplex in $\R^m_\ge$,
and
\[
D = \ker \mathcal B
\quad \text{with} \quad
\mathcal{B} =
\begin{pmatrix}
B \\ \ones^\trans
\end{pmatrix} .
\]

The normalization via $P$ reduces dimensions and eliminates a ``radial'' symmetry in $C$.
Indeed, for $y \in C$, $z \in D$, and $\lambda>0$, we have
$(\lambda y)^z = \lambda^{\ones^\trans \!z} \, y^z = y^z$, since $\ones^\trans z = 0$ for $z \in D$.

Theorem~\ref{thm:previous} implies the following
existence result.

\begin{corollary} \label{cor:previous}
Consider the parametrized system of generalized polynomial inequalities $(c \circ x^B) \in C$,
and the solution sets
\[
\mathcal{X}_c = \{ x \in \R^n_> \mid (c \circ x^B) \in C \}
\quad \text{and} \quad
\mathcal{Y}_c = \{ y \in P \mid 
y^z = c^z \text{ for all } z \in D \} ,
\]
where $P$ is the coefficient polytope and $D$ the monomial dependency subspace.
Then,
\[
\mathcal{X}_c \neq \emptyset
\quad \iff \quad
\mathcal{Y}_c \neq \emptyset .
\]
\end{corollary}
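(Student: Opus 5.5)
The plan is to read Corollary~\ref{cor:previous} directly off Theorem~\ref{thm:previous}. The theorem describes the solution set explicitly,
\[
\mathcal{X}_c = \{ (y \circ c^{-1})^E \mid y \in \mathcal{Y}_c \} \circ \e^{L^\perp} ,
\]
so both directions of the equivalence come from this parametrization. We take as given that $\e^{L^\perp}$ is nonempty (it contains the vector of ones) and that $E$ is a well-defined generalized inverse.

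\emph{Direction ($\Leftarrow$).} Suppose $\mathcal{Y}_c \neq \emptyset$ and pick $y \in \mathcal{Y}_c$. Since $y \in P \subseteq C \subseteq \R^m_>$ and $c > 0$, the vector $(y \circ c^{-1})^E$ is a well-defined element of $\R^n_>$. Multiplying it by $\ones = \e^{0} \in \e^{L^\perp}$ gives an element of $\mathcal{X}_c$, so $\mathcal{X}_c \neq \emptyset$.

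\emph{Direction ($\Rightarrow$).} Suppose $x \in \mathcal{X}_c$. By the displayed equality, $x$ has the form $(y \circ c^{-1})^E \circ \e^{w}$ for some $y \in \mathcal{Y}_c$ and $w \in L^\perp$. In particular $\mathcal{Y}_c \neq \emptyset$. Equivalently, this direction follows from the stated bijection between $\mathcal{X}_c/\e^{L^\perp}$ and $\mathcal{Y}_c$: a nonempty quotient gives a nonempty $\mathcal{Y}_c$.

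It is reassuring to also see the forward direction concretely, without the full theorem. Given $x \in \mathcal{X}_c$, set $u = c \circ x^B \in C$ and normalize $y = u/(\ones^\trans u) \in C \cap \Delta = P$; this uses that $C$ is a cone. For $z \in D$ we have $\ones^\trans z = 0$ and $Bz = 0$. Hence $y^z = u^z = c^z \, (x^B)^z = c^z \, x^{Bz} = c^z$, so $y \in \mathcal{Y}_c$.

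The only step that needs care is the reverse direction. There one must check that $(y \circ c^{-1})^E$ actually solves the system. This amounts to verifying $c \circ ((y\circ c^{-1})^E)^B \in C$, which relies on the binomial conditions $y^z = c^z$ for $z \in D$ together with the defining property of $E$. That verification is exactly the content of Theorem~\ref{thm:previous}, which we invoke rather than reprove. With it, the corollary is immediate.
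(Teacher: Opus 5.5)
Your proposal is correct and matches the paper, which obtains the corollary directly from the parametrization $\mathcal{X}_c = \{ (y \circ c^{-1})^E \mid y \in \mathcal{Y}_c \} \circ \e^{L^\perp}$ in Theorem~\ref{thm:previous} (equivalently, from the bijection between $\mathcal{X}_c/\e^{L^\perp}$ and $\mathcal{Y}_c$). Your additional direct check of the forward direction is also correct: you normalize $c \circ x^B$ into $P = C \cap \Delta$ and use $Bz = 0$ and $\ones^\trans z = 0$, which shows that only the reverse direction actually needs the generalized inverse $E$.
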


{\bf Notation.}
In the abstract setting of generalized polynomial inequalities,
we have used the symbol $y$ for elements of $\mathcal{Y}_c$,
for better readability.
In the setting of disguised complex balance,
we have to avoid a symbol clash with $y,Y$ (and $y_s,Y_s$), denoting complexes.

\subsubsection*{Main result}

We apply Corollary~\ref{cor:previous}, stated for the abstract system $(c \circ x^B) \in C$, to the characterization of the disguised complex-balanced parameter locus in Theorem~\ref{thm:KdCB_x}.
There, $v_k(x) = k \circ x^{Y_s I_{E,s}}$, so that $c=k$ and $B=Y_s I_{E,s}$; further $C=\mathcal{C}_>$ with $\mathcal{C} = \mathcal{C}^\text{dCB}_{(G,y)}$.
Hence,
\begin{align*}
\mathcal{K}^\textnormal{dCB}_{(G,y)} 
&= \{ k \in \R^E_> \mid 
\exists x \in \R^n_> 
\colon
v_k(x) \in \mathcal{C}_> \} \\
&= \{ k \in \R^E_> \mid 
\exists x \in \R^n_> 
\colon
( k \circ x^{Y_s I_{E,s}} ) \in \mathcal{C}_> \} \\
&= \{ k \in \R^E_> \mid
\exists \nu \in \mathcal{P} \colon \nu^z = k^z , \forall z \in \mathcal{D} \} ,
\end{align*}
where $\mathcal{P} = \mathcal{C}_> \cap \Delta$ and $\mathcal{D} = \ker B \cap \ker \ones^\trans$.
We have shown the main result of this work.

\begin{theorem} \label{thm:KdCB_y}
Let $(G,y)$ with $G=(V,E)$ be a reaction network, \[
\mathcal{P} = \left( \mathcal{C}^\textnormal{dCB}_{(G,y)}\right)_> \cap \Delta
\quad \text{and} \quad
\mathcal{D} = \ker \binom{Y_s I_{E,s}}{\ones^\trans} .
\]
Then,
\[
\mathcal{K}^\textnormal{dCB}_{(G,y)} = 
\{ k \in \R^E_> \mid 
\exists \nu \in \mathcal{P}
\textnormal{ such that } 
\nu^z=k^z
\textnormal{ for all } 
z \in \mathcal{D}
\} .
\]
\end{theorem}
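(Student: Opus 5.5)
The proof combines Theorem~\ref{thm:KdCB_x} with Corollary~\ref{cor:previous}; the only genuine work is checking that the hypotheses of the corollary hold in the present setting. First, I invoke Theorem~\ref{thm:KdCB_x} to write
\[
\mathcal{K}^\textnormal{dCB}_{(G,y)} = \{ k \in \R^E_> \mid \exists x \in \R^n_> \colon k \circ x^{Y_s I_{E,s}} \in \mathcal{C}_> \}, \quad \mathcal{C} = \mathcal{C}^\textnormal{dCB}_{(G,y)} .
\]
For fixed $k \in \R^E_>$, the inner condition is a parametrized system of generalized polynomial inequalities $(c \circ x^B) \in C$. Here the parameter is $c = k \in \R^E_>$, the exponent matrix is $B = Y_s I_{E,s} \in \R^{n \times E}$, the index set is $m = E$ (in index notation), and the coefficient cone is $C = \mathcal{C}_> \subseteq \R^E_>$. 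Hence $k \in \mathcal{K}^\textnormal{dCB}_{(G,y)}$ holds if and only if $\mathcal{X}_k \neq \emptyset$.

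Second, I apply Corollary~\ref{cor:previous} to conclude $\mathcal{X}_k \neq \emptyset \iff \mathcal{Y}_k \neq \emptyset$, where $\mathcal{Y}_k = \{ \nu \in P \mid \nu^z = k^z \ \forall z \in D \}$. It then remains to identify $P$ and $D$ with $\mathcal{P}$ and $\mathcal{D}$. By definition $D = \ker \binom{B}{\ones^\trans} = \ker \binom{Y_s I_{E,s}}{\ones^\trans} = \mathcal{D}$, which needs no further argument. The coefficient polytope should be $P = C \cap \Delta = \mathcal{C}_> \cap \Delta = \mathcal{P}$.

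The main obstacle is justifying this last identification, since the formula $P = C \cap \Delta$ was stated under the simplifying assumption that $C$ is not a direct product of cones. I would not try to verify that assumption for $\mathcal{C}_>$ in general. Instead, I would argue directly that existence is unaffected by the normalization. Suppose $\nu \in \mathcal{C}_>$ satisfies $\nu^z = k^z$ for all $z \in \mathcal{D}$. Put $\lambda = \ones^\trans \nu > 0$. Then $\nu/\lambda \in \mathcal{C}_> \cap \Delta$, because $\mathcal{C}$ is a cone, and $(\nu/\lambda)^z = \lambda^{-\ones^\trans z} \nu^z = \nu^z$, because $\ones^\trans z = 0$ on $\mathcal{D}$. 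The converse direction is immediate, since $\mathcal{P} \subseteq \mathcal{C}_>$.

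To be safe against the direct-product subtlety altogether, I would also check the equivalence by hand, using only the radial argument above and the general form of Theorem~\ref{thm:previous}. Suppose $\nu \in \mathcal{C}_>$ satisfies $\nu^z = k^z$ for all $z \in \mathcal{D}$. Then $\log \nu - \log k$ lies in $\mathcal{D}^\perp = \im \mathcal{B}^\trans$, so $\log \nu - \log k = B^\trans \log x + \mu \ones$ for some $x \in \R^n_>$ and $\mu \in \R$. Therefore $\nu = \e^{\mu}\, k \circ x^{B}$, and rescaling by $\e^{-\mu}$ gives $k \circ x^B \in \mathcal{C}_>$. Conversely, if $\nu = k \circ x^B \in \mathcal{C}_>$, then $\nu^z = k^z x^{Bz} = k^z$ for $z \in \ker B \supseteq \mathcal{D}$. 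This self-contained duality argument is what I would write out, since it makes the result independent of any product decomposition of the cone.
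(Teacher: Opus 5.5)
Your proof is correct. Its first step is the same as the paper's: Theorem~\ref{thm:KdCB_x} turns membership in $\mathcal{K}^\textnormal{dCB}_{(G,y)}$ into solvability of $(k\circ x^{B})\in\mathcal{C}_>$ with $B=Y_sI_{E,s}$.

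The paper then just substitutes $c=k$, $B$ and $C=\mathcal{C}_>$ into Corollary~\ref{cor:previous}. It reads off $\mathcal{P}=\mathcal{C}_>\cap\Delta$ and $\mathcal{D}=\ker B\cap\ker\ones^\trans$ under its standing simplification that the coefficient cone is not a direct product of cones.

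Your final argument instead proves the existence statement directly, without the cited result:
\begin{itemize}
\item If $\nu^z=k^z$ for all $z\in\mathcal{D}$, then $\log\nu-\log k\in\mathcal{D}^\perp=\im\mathcal{B}^\trans$ with $\mathcal{B}=\binom{B}{\ones^\trans}$. Hence $\nu=\e^{\mu}\,(k\circ x^{B})$ for some $x\in\R^n_>$ and $\mu\in\R$.
\item Conversely, $(k\circ x^B)^z=k^z x^{Bz}=k^z$ for $z\in\ker B$.
\item Invariance of $\mathcal{C}_>$ under positive scaling, together with $\ones^\trans z=0$ on $\mathcal{D}$, lets you pass between $\mathcal{C}_>$ and $\mathcal{P}$.
\end{itemize}
This uses nothing about $\mathcal{C}$ beyond conic invariance. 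So the direct-product question becomes irrelevant, and the theorem is self-contained and elementary.

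Note that it is the duality step, not your earlier normalization remark, that settles the identification. The normalization only shows that restricting from $\mathcal{C}_>$ to $\mathcal{P}$ is harmless; it does not match the corollary's $P,D$ with $\mathcal{P},\mathcal{D}$. Writing out the duality argument, as you propose, is therefore the right choice.

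What the paper's route gains is placing the result inside the framework of Theorem~\ref{thm:previous}. That framework gives more than existence: an explicit parametrization of the solution set $\mathcal{X}_k$ via a generalized inverse, and the bijection between $\mathcal{X}_k/\e^{L^\perp}$ and $\mathcal{Y}_k$. For the existence statement proved here, your short linear-algebra argument is all that is needed.
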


The characterization of $\mathcal{K}^\text{dCB}_{(G,y)}$ given in Theorem~\ref{thm:KdCB_x} can be viewed as a quantifier elimination problem for the $n+|E|$ variables $x \in \R^n_>$ and $\nu \in \R^E_>$.
Applying the framework for generalized polynomial inequalities developed in~\cite{MuellerRegensburger2023a},
we eliminate the $n$ (state) variables $x \in \R^n_>$ and obtain an equivalent characterization in terms of the (flux) variables $\nu \in \R^E_>$ only, as stated in Theorem~\ref{thm:KdCB_y}.


\section{Example}

We analyze the running example from~\cite{BorosEtAl2026}; see Figure~\ref{fig}.
The reaction network $(G,y)$ is given by the graph $G=(V,E)$ with $V=V_s=\{1,2,3,4\}$, $E=\{ 1 \to 2, 1 \to 4 , 2 \to 3, 3 \to 2, 3 \to 4, 4 \to 1 \}$ and the map $y=y_s \colon V \to \R^2_\ge$ with $y(1) = (0,0)^\trans, y(2) = (1,0)^\trans, y(3) = (1,1)^\trans, y(4) = (0,1)^\trans$.

\begin{figure}[!ht]
\begin{center}
\begin{tikzpicture}[scale=2]
    \input{fig_run_G}
\end{tikzpicture}
\hspace{1cm}
\begin{tikzpicture}[scale=2]
    \input{fig_run_Gcom}
\end{tikzpicture}
\hspace{1cm}
\begin{tikzpicture}[scale=2]
    \input{fig_run_Grel}
\end{tikzpicture}
\caption{
Reaction network (as an embedded graph),
complete graph (on the source vertices),
and relevant graph for disguised complex balance}
\label{fig}
\end{center}
\end{figure}

We write dynamical equality of the mass-action systems $(G_k,y)$ and $((G^\text{com}_{V_s})_{\bar k},y_s)$ in terms of $\nu = v_k(x) \in \R^E_>$ and $\bar\nu = v_{\bar k}(x) \in \R^{E^\text{com}}_\ge$,
\begin{align*}
\Gamma_\nu &=
\begin{pmatrix}
\nu_{12} & 0        & - \nu_{34} & 0 \\
\nu_{14} & \nu_{23} & - \nu_{32} & - \nu_{41}
\end{pmatrix} =
\begin{pmatrix}
\bar\nu_{12} + \bar\nu_{13}   & - \bar\nu_{21} - \bar\nu_{24} & 
- \bar\nu_{31} - \bar\nu_{34} & \bar\nu_{42} + \bar\nu_{43} \\
\bar\nu_{13} + \bar\nu_{14}   & \bar\nu_{23} + \bar\nu_{24}   &
- \bar\nu_{31} - \bar\nu_{32} & - \bar\nu_{41} - \bar\nu_{42}
\end{pmatrix}
= \Gamma_{\bar\nu} .
\end{align*}

Dynamical equality alone implies $\bar\nu_{21}=\bar\nu_{24}=\bar\nu_{42}=\bar\nu_{43}=0$ (without using complex balance for $\bar\nu$ and positivity of $\nu$).
Hence, instead of $G^\text{com}_{V_s}$, we consider the graph of relevant edges $G^\text{rel}_{V_s} = (V_s,E^\text{rel})$ with $E^\text{rel} = E^\text{com} \setminus \{ 2 \to 1, 2 \to 4, 4 \to 2, 4 \to 3 \}$,
and rewrite dynamical equality in terms of $\nu = v_k(x) \in \R^E_>$ and $\bar\nu = v_{\bar k}(x) \in \R^{E^\text{rel}}_\ge$,
\begin{equation} \tag{DE}
\Gamma_\nu =
\begin{pmatrix}
\nu_{12} & 0        & - \nu_{34} & 0 \\
\nu_{14} & \nu_{23} & - \nu_{32} & - \nu_{41}
\end{pmatrix} =
\begin{pmatrix}
\bar\nu_{12} + \bar\nu_{13}   & 0 & 
- \bar\nu_{31} - \bar\nu_{34} & 0 \\
\bar\nu_{13} + \bar\nu_{14}   & \bar\nu_{23} &
- \bar\nu_{31} - \bar\nu_{32} & - \bar\nu_{41} 
\end{pmatrix}
= \Gamma_{\bar\nu} .
\end{equation}

Further, we write complex balance in terms of $\bar\nu = v_{\bar k}(x) \in \R^{E^\text{rel}}_\ge$,
\begin{equation} \tag{CB}
I_{E^\text{rel}} \, \bar\nu =
\begin{pmatrix}
\bar\nu_{31} + \bar\nu_{41} - \bar\nu_{12} - \bar\nu_{13} - \bar\nu_{14} \\
\bar\nu_{12} + \bar\nu_{32} - \bar\nu_{23} \\
\bar\nu_{13} + \bar\nu_{23} - \bar\nu_{31} - \bar\nu_{32} - \bar\nu_{34} \\
\bar\nu_{14} + \bar\nu_{34} - \bar\nu_{41} 
\end{pmatrix}
= 0 .
\end{equation}

Now, we compute the disguised complex-balanced flux cone,
\begin{equation} \tag{$\mathcal{C}$}
\begin{aligned}
\mathcal{C}^\text{dCB}_{(G,y)} &=
\{ \nu \in \R^E_\ge \mid 
\exists \bar\nu \in \R^{E^\text{rel}}_\ge \colon
I_{E^\text{rel}} \, \bar\nu = 0 
\wedge
\Gamma_\nu= \Gamma_{\bar\nu} 
\} \\
&=
\{ \nu \in \R^E_\ge \mid
\nu_{12} - \nu_{34} =
\nu_{23} + \nu_{14} - \nu_{32} - \nu_{41} = 0 , \\
& \phantom{= \{ \nu \in \R^E_\ge \mid \hspace{5pt}}
| \nu_{41} - \nu_{14} | \le \nu_{12} \le \nu_{41} + \nu_{32}
\} ,
\end{aligned}
\end{equation}
by variable elimination/cone projection,
using the {\tt lrs} (lexicographic reverse search) algorithm~\cite{avis1992pivoting,avis2000lrs,avis2023lrslib}.

Clearly, $\mathcal{C}^\text{dCB}_{(G,y)} \subset \mathcal{C}^\text{flx}_{(G,y)}$ with the (classical) flux cone
\begin{align*}
\mathcal{C}^\text{flx}_{(G,y)} &= 
\{ \nu \in \R^E_\ge \mid 
N \nu = 0 \} \\
&= \{ \nu \in \R^E_\ge \mid
\nu_{12} - \nu_{34} = \nu_{23} + \nu_{14} - \nu_{32} - \nu_{41} = 0 
\} .
\end{align*}

By Theorem~\ref{thm:KdCB_y},
the disguised complex-balanced parameter locus
can be determined as
\begin{equation} \tag{$\mathcal{K}$}
\mathcal{K}^\textnormal{dCB}_{(G,y)} = 
\{ k \in \R^E_> \mid 
\exists \nu \in \mathcal{P} \colon
\nu^z=k^z , \,
\forall 
z \in \mathcal{D}
\} ,
\end{equation}
where
\[
\mathcal{P} = \left( \mathcal{C}^\textnormal{dCB}_{(G,y)} \right)_> \cap \Delta
\quad \text{and} \quad
\mathcal{D} = \ker \binom{Y_s I_{E,s}}{\ones^\trans} .
\]
The ``polytope'' $\mathcal P$ is a normalization of the positive part of the cone $\mathcal{C}^\textnormal{dCB}_{(G,y)}$, obtained by intersecting with the standard simplex $\Delta$, 
and the subspace $\mathcal D$ records affine dependencies between the source complexes (monomials) of the reactions.
For the latter, we specify (support-minimal) basis vectors,
\[
\mathcal{D} = \ker
\bordermatrix{& \gray{12} & \gray{14} & \gray{23} & \gray{32} & \gray{34} & \gray{41} \cr
& 0 & 0 & 1 & 1 & 1 & 0 \cr
& 0 & 0 & 0 & 1 & 1 & 1 \cr
& 1 & 1 & 1 & 1 & 1 & 1
}
= \im
\bordermatrix{& \cr
\gray{12} &-1 & 0 &-1 \cr
\gray{14} & 1 & 0 & 0 \cr
\gray{23} & 0 & 0 & 1 \cr
\gray{32} & 0 &-1 & 0 \cr
\gray{34} & 0 & 1 &-1 \cr
\gray{41} & 0 & 0 & 1
}
.
\]

Altogether, we have to determine $\nu \in \R^E_>$ that fulfill the equality and inequality constraints in $(\mathcal{C})$ and  $(\mathcal{K})$,
\begin{gather*}
\nu_{12} - \nu_{34} = \nu_{23} + \nu_{14} - \nu_{32} - \nu_{41} = 0 , \\
\left( \nu_{12} + \nu_{14} + \nu_{23} + \nu_{32} + \nu_{34} + \nu_{41} = 1 \right) , \\
\frac{\nu_{14}}{\nu_{12}} = \frac{k_{14}}{k_{12}} =: c_1 , \quad 
\frac{\nu_{32}}{\nu_{34}} = \frac{k_{32}}{k_{34}} =: c_3 , \quad 
\frac{\nu_{23} \nu_{41}}{\nu_{12} \nu_{34}} = \frac{k_{23} k_{41}}{k_{12} k_{34}} =: c_\circ ,
\intertext{and}
| \nu_{41} - \nu_{14} | \le \nu_{12} \le \nu_{41} + \nu_{32} .
\end{gather*}

The four homogeneous linear equations
allow us to express all variables in terms of $\nu_{12}$ and $\nu_{41}$,
\[
\nu_{14} = c_1 \nu_{12} , \quad
\nu_{23} = (c_3-c_1) \nu_{12} + \nu_{41} , \quad 
\nu_{32} = c_3 \nu_{34} , \quad
\nu_{34} = \nu_{12} .
\]
The last (homogeneous, nonlinear) equation becomes
\[
\frac{\left( (c_3-c_1) \nu_{12} + \nu_{41} \right) \nu_{41}}{(\nu_{12})^2} = c_\circ ,
\]
and the inequalities take the form
\[
| \nu_{41} - c_1 \nu_{12} | \le \nu_{12} \le \nu_{41} + c_3 \nu_{12} .
\]

All (in-)equalities (except the normalization) are homogeneous, and we obtain a quadratic equation for the ratio $\xi = \nu_{41}/\nu_{12}$,
\[
f(\xi) := \left( c_3-c_1 + \xi \right) \xi = c_\circ ,
\]
constrained by the inequalities
\[
| \xi - c_1 | \le 1 \le \xi + c_3 .
\]
Every admissible solution $\xi$ determines a unique $\nu$ (after normalization) and hence a disguised complex-balanced realization.
Thus, we have reduced the six-dimensional quantifier-elimination problem to the one-dimensional feasibility problem
\[
f(\xi) = c_\circ , 
\quad
\xi \in [\xi^\text{lb},\xi^\text{ub}] ,
\quad \text{where }
\xi^\text{lb} = \max(c_1-1,1-c_3)
\text{ and } \xi^\text{ub} = 1+c_1 .
\]

Crucially, the quadratic function is monotonically increasing on the admissible interval,
\begin{align*}
f'(\xi) &= 2 \xi + c_3 - c_1 \\
&\ge 2 \xi^\text{lb} + c_3 - c_1 \\
&= \max(c_1+c_3-2, 2-c_3-c_1) \\
&= |c_1+c_3-2| \\
&\ge 0 ,
\end{align*}
and solvability is equivalent to
\[
f(\xi^\text{lb}) \le c_\circ \le f(\xi^\text{ub}) .
\]

The boundary values are
\[
f(c_1-1)=f(1-c_3)=(1-c_1)(1-c_3)
\quad \text{and} \quad
f(\xi^\text{ub})=(1+c_1)(1+c_3) ,
\]
and feasibility becomes
\[
(1-c_1)(1-c_3) \le c_\circ \le (1+c_1)(1+c_3) 
\]
or, in the original parameters,
\begin{equation} \tag{$\star$}
\textstyle
(1-\frac{k_{14}}{k_{12}})(1-\frac{k_{32}}{k_{34}}) \le \frac{k_{23} k_{41}}{k_{12} k_{34}} \le (1+\frac{k_{14}}{k_{12}})(1+\frac{k_{32}}{k_{34}}) .
\end{equation}
That is,
\[
\mathcal{K}^\textnormal{dCB}_{(G,y)} = 
\{ k \in \R^E_> \mid (\star) \} .
\]
This characterization was also obtained by Boros et al.~\cite{BorosEtAl2026}. Their derivation starts from Theorem~\ref{thm:KdCB_x} and performs quantifier elimination on the variables $(x,\nu)$ using {\tt Mathematica}, whereas we start from Theorem~\ref{thm:KdCB_y} and perform the quantifier elimination on the variables $\nu$ analytically.

\subsection*{Acknowledgements}

This research was funded in whole, or in part, by the Austrian Science Fund (FWF), 
grant DOI 10.55776/PAT3748324 to SM.

For open access purposes, the authors have applied a CC BY public
copyright license to any author-accepted manuscript version arising
from this submission.

\subsection*{Conflict of interest and data availability}

On behalf of all authors, the corresponding author states 
that there is no conflict of interest
and that the manuscript has no associated data.


\clearpage

\addcontentsline{toc}{section}{References}
\bibliographystyle{abbrv}
\bibliography{old,new,MR}

\end{document}

%% file: fig_run_G.tex
\draw [step=1, gray, very thin] (0,0) grid (1,1);
\draw [ ->, gray, very thin] (0,0)--(1.25,0);
\draw [ ->, gray, very thin] (0,0)--(0,1.25);

\begin{scope}[shift={(0,0)}]

\node[circle, fill=blue, inner sep=1pt, outer sep=5pt] (A) at (0,0) {};
\node[circle, fill=blue, inner sep=1pt, outer sep=5pt] (B) at (1,0) {};
\node[circle, fill=blue, inner sep=1pt, outer sep=5pt] (C) at (1,1) {};
\node[circle, fill=blue, inner sep=1pt, outer sep=5pt] (D) at (0,1) {};


\draw[arrows={-stealth},very thick,blue] (A) to (B);
\draw[arrows={-stealth},very thick,blue,transform canvas={xshift=+2pt}] (B) to (C);
\draw[arrows={-stealth},very thick,blue,transform canvas={xshift=-2pt}] (C) to (B);
\draw[arrows={-stealth},very thick,blue] (C) to (D);
\draw[arrows={-stealth},very thick,blue,transform canvas={xshift=-2pt}] (D) to (A);
\draw[arrows={-stealth},very thick,blue,transform canvas={xshift=+2pt}] (A) to (D);

\node [below left]  (A) at (A) {$1$};
\node [below right] (B) at (B) {$2$};
\node [above right] (C) at (C) {$3$};
\node [above left]  (D) at (D) {$4$};

\node (G) at (0.5,1.5) {$G=(V,E)$};

\end{scope}

%% file: fig_run_Gcom.tex
\draw [step=1, gray, very thin] (0,0) grid (1,1);
\draw [ ->, gray, very thin] (0,0)--(1.25,0);
\draw [ ->, gray, very thin] (0,0)--(0,1.25);

\begin{scope}[shift={(0,0)}]

\node[circle, fill=gray, inner sep=1pt, outer sep=5pt] (A) at (0,0) {};
\node[circle, fill=gray, inner sep=1pt, outer sep=5pt] (B) at (1,0) {};
\node[circle, fill=gray, inner sep=1pt, outer sep=5pt] (C) at (1,1) {};
\node[circle, fill=gray, inner sep=1pt, outer sep=5pt] (D) at (0,1) {};


\draw[arrows={-stealth},very thick,gray,transform canvas={yshift=-2pt}] (A) to (B);
\draw[arrows={-stealth},very thick,gray,transform canvas={yshift=+2pt}] (B) to (A);
\draw[arrows={-stealth},very thick,gray,transform canvas={xshift=+2pt}] (B) to (C);
\draw[arrows={-stealth},very thick,gray,transform canvas={xshift=-2pt}] (C) to (B);
\draw[arrows={-stealth},very thick,gray,transform canvas={yshift=+2pt}] (C) to (D);
\draw[arrows={-stealth},very thick,gray,transform canvas={yshift=-2pt}] (D) to (C);
\draw[arrows={-stealth},very thick,gray,transform canvas={xshift=-2pt}] (D) to (A);
\draw[arrows={-stealth},very thick,gray,transform canvas={xshift=+2pt}] (A) to (D);

\draw[arrows={-stealth},very thick,gray,transform canvas={xshift=+1.4pt,yshift=-1.4pt}] (A) to (C);
\draw[arrows={-stealth},very thick,gray,transform canvas={xshift=-1.4pt,yshift=+1.4pt}] (C) to (A);
\draw[arrows={-stealth},very thick,gray,transform canvas={xshift=+1.4pt,yshift=+1.4pt}] (B) to (D);
\draw[arrows={-stealth},very thick,gray,transform canvas={xshift=-1.4pt,yshift=-1.4pt}] (D) to (B);

\node [below left]  (A) at (A) {$1$};
\node [below right] (B) at (B) {$2$};
\node [above right] (C) at (C) {$3$};
\node [above left]  (D) at (D) {$4$};

\node (G) at (0.5,1.5) {$G^\text{com}_{V_s}$};

\end{scope}

%% file: fig_run_Grel.tex
\draw [step=1, gray, very thin] (0,0) grid (1,1);
\draw [ ->, gray, very thin] (0,0)--(1.25,0);
\draw [ ->, gray, very thin] (0,0)--(0,1.25);

\begin{scope}[shift={(0,0)}]

\node[circle, fill=blue, inner sep=1pt, outer sep=5pt] (A) at (0,0) {};
\node[circle, fill=blue, inner sep=1pt, outer sep=5pt] (B) at (1,0) {};
\node[circle, fill=blue, inner sep=1pt, outer sep=5pt] (C) at (1,1) {};
\node[circle, fill=blue, inner sep=1pt, outer sep=5pt] (D) at (0,1) {};


\draw[arrows={-stealth},very thick,blue] (A) to (B);
\draw[arrows={-stealth},very thick,blue,transform canvas={xshift=+2pt}] (B) to (C);
\draw[arrows={-stealth},very thick,blue,transform canvas={xshift=-2pt}] (C) to (B);
\draw[arrows={-stealth},very thick,blue] (C) to (D);
\draw[arrows={-stealth},very thick,blue,transform canvas={xshift=-2pt}] (D) to (A);
\draw[arrows={-stealth},very thick,blue,transform canvas={xshift=+2pt}] (A) to (D);

\draw[arrows={-stealth},very thick,gray,transform canvas={xshift=+1.4pt,yshift=-1.4pt}] (A) to (C);
\draw[arrows={-stealth},very thick,gray,transform canvas={xshift=-1.4pt,yshift=+1.4pt}] (C) to (A);

\node [below left]  (A) at (A) {$1$};
\node [below right] (B) at (B) {$2$};
\node [above right] (C) at (C) {$3$};
\node [above left]  (D) at (D) {$4$};

\node (G) at (0.5,1.5) {$G^\text{rel}_{V_s}$};

\end{scope}